\newcommand{\cD}{\mathcal{D}}
\newcommand{\R}{{\mathbbm R}}
\newcommand{\vp}{\varphi}
\newcommand{\m}{{\mu}}
\newtheorem{definition}{Definition}[section]
\newtheorem{remark}{Remark}[section]
\newtheorem{theorem}{Theorem}[section]
\newtheorem{lemma}[theorem]{Lemma}
\newtheorem{proposition}{Proposition}[section]
\newtheorem{corollary}{Corollary}[section]
\numberwithin{equation}{section}
\def\vs1{\vspace{1ex}}
\def\O{\Omega}
\def\pa{\partial}
\def\dy{\displaystyle}
\def\ve{\varepsilon}
\def\be{\begin{equation}}
\def\ba{\begin{array}}
\def\ea{\end{array}}
\def\ee{\end{equation}}
\begin{document}
\title{\bf\large On the global regularity for nonlinear systems \\
of the p-Laplacian type}
\author{ H.~Beir\~ao da Veiga\ and \ F.~Crispo}

\date{}
\maketitle
\begin{abstract}
We consider the Dirichlet boundary value problem for nonlinear
systems of partial differential equations with $p$-structure. We
choose two representative cases: the ``full gradient case'',
corresponding to a p-Laplacian, and the ``symmetric gradient case'',
arising from mathematical physics. The domain is either the so
called``cubic domain'' or a bounded open subset of $\R^3$ with a
smooth boundary. We are interested in regularity results, up to the
boundary, for the second order derivatives of the velocity field.
Depending on the model and on the range of $p$, $p<2$ or $p>2$, we
prove different regularity results. It is worth noting that in the
full gradient case, with $p<2\,,$ we cover the degenerate case, and
obtain $W^{2,q}$-global regularity results, for arbitrarily large
values of $q$.
\end{abstract}

\vspace{0.2cm}

\noindent \textbf{Keywords:} p-Laplacian systems, regularity up to
the boundary, full regularity.



 \section{Introduction}
We are concerned with the regularity problem for solutions of
nonlinear systems of partial differential equations with
$p$-structure, $p\,>1$, under Dirichlet boundary conditions. In
order to emphasize the main ideas we confine ourselves to the
following representative cases (where  $\mu\,\geq \,0$ is
a fixed constant):\par%
The ``full gradient case'' \be\label{NSC}%
-\,\nabla
\cdot S\,(\,\nabla\, u\,) =\,f\,,
\ee where \be\label{ex1}
S(\nabla\,u)=\,(\,\m+|\,\nabla\,u|\,)^{p-2}\,\nabla\,u\,;\ee%
and the  ``symmetric gradient case'' \be\label{NSCS} -\,\nabla \cdot
S\,(\,\cD\, u\,) =\,f\,,
 \ee where \be\label{ex2}
S(\cD\,u)=\,(\,\m+|\,\cD\,u\,|)^{p-2}\,\cD\,u\,.\ee%
As usual, $$\cD\, u\,=\,\frac 12\,(\,\nabla\, u\,+\,\nabla
\,u^T\,)$$
is the symmetric part of the gradient of $u$.\par%
When $\mu=0$ in \eqref{ex1}, the system \eqref{NSC} is the
well-known p-Laplacian system.\par%
It is worth noting that our results concern global (up to the
boundary), full regularity for the second derivatives of solutions
to the previous systems, with Dirichlet boundary conditions (one
could also consider slip type boundary conditions). The regularity
issue for systems like \eqref{NSC}  has received substantial
attention, mostly concerned with an equation in place of a system,
and with $C^{1,\alpha}_{loc}$-regularity. In the scalar case,
existence and interior integrability of the second derivatives are
shown in \cite{Tolk2}, for any $p>1$;  in \cite{Liu} the regularity
up to the boundary is obtained for any $p\,\in (1,2)$. For systems
(solutions are $N$-dimensional vector fields, $N>1$), we recall
\cite{acerbi} for $p\in (1,2)$, \cite{GM} and \cite{uhlenb} for
$p>2$, and \cite{Hamburger} for any $p>1$. These papers deal only
with homogeneous systems and the techniques, sometimes quite
involved, seem not to be directly applicable to the non-homogeneous
setting. In particular, \cite{acerbi} is the only paper in which the
$L^2$-regularity of second derivatives is considered. However, the
results are shown only in the interior. Therefore our results seem
to be the first regularity results, up to the boundary, for the
second derivatives of solutions. Another main difference with the
above papers is that we do not require differentiability of $S$, but
merely Lipschitz continuity.
 For related results and for an extensive bibliography we also
refer to papers \cite{AM}, \cite{DB}, \cite{DBM}, \cite{FuM},
\cite{FS},  \cite{Lieb}, \cite{MP}, \cite{Ming} and references
therein.
\par%
We have not found papers dealing with the equations arising from the
choice \eqref{ex2} for $S$. This kind of model is used in various
branches of mathematical physics as, for instance, in non-linear
elasticity or in non-linear diffusion. Actually, our interest in
systems \eqref{NSC} and \eqref{NSCS} arises from our previous
studies on fluid dynamics problems. Indeed, we recall that a good
model for non-Newtonian fluids with shear dependent viscosity is the
following one \be\label{NNF} -\,\nabla \cdot
\left[\,(\,\m+|\,\cD\,u\,|)^{p-2}\,\cD\,u\,\right]+(u\cdot\nabla)u+\nabla
\pi =\,f\,, \ \ \nabla\cdot u=0\,,\ee which can be obtained from
\eqref{NSCS} by adding the contribution of the pressure field $\pi$,
the convective term $(u\cdot\nabla)u$ and the divergence free
constraint. For this system, regularity up to the boundary  has been
considered in both the cases $p<2$ and $p>2$. The case $p=2$
corresponds to the well known Navier-Stokes system for Newtonian
fluids. For the more general regularity results and a wide
bibliography on this topic, we refer the reader to \cite{Bsmooth},
\cite{BKR} for $p>2$, and to \cite{Bsmooth2} for $p<2$. Despite many
contributions to the regularity issue, $W^{2,2}$-regularity up to
the boundary for solutions to \eqref{NNF} is still open, even for
the simplified setting of ``generalized'' Stokes system, obtained by
dropping the convective term in \eqref{NNF}. We mention the papers
\cite{crigri} and \cite{crigri2}, which, as far as we know, are the
only papers where the $W^{2,2}(\O)\cap
C^{1,\alpha}(\overline\Omega)$-regularity is obtained, under the
additional assumption of a small force. The regularity proved below
suggests that the main obstacle to the $W^{2,2}$-regularity of
solutions of \eqref{NNF} is actually the
presence of the pressure term.\par%
Our interest in fluid-mechanics, and in particular in non-Newtonian
fluids, leads us to consider the case $n=N=3$. However, it is worth
noting that our results can be immediately extended to dimensions
$n>3$, and to $N$-dimensional vector fields, $N\not=3$. Further, the
explicit choices \eqref{ex1} and \eqref{ex2} are done in order to
emphasize the core aspects of the results and to avoid additional
technicalities. Therefore, we do not consider a more general
dependence of $S$ on $\nabla u$ or $\cD\,u$, as for instance
$S(\nabla u)= \varphi(|\nabla u|)\, \nabla u$, under
 suitable assumptions on the scalar function $\varphi$. For the same
 reason we avoid the introduction of lower order terms.
\par In the sequel we cover both the cases $p<2$ and $p>2$, with,
however, some differences, and some restrictions on the exponent
$p\,$, as follows:
\par Case $p<2$: For $p<2$ we consider the ``full gradient
case'' \eqref{NSC}. In this case, all results hold also in the
degenerate case $\mu=0$. For any bounded and sufficiently smooth
domain $\O$, we prove $W^{2,q}(\O)$ regularity, for any $q\geq 2$.
Therefore, we get, as a by product, the H\"{o}lder continuity, up to
the boundary, of the gradient of the solution. Results are obtained
for $\,p\,$ belonging to suitable intervals $[C,2)$, where the
constants $C$ are defined precisely. \par%
Case $\,p>2$: We prove the $W^{2,2}$-regularity in both cases,
\eqref{NSC} and \eqref{NSCS}, provided that $\mu>0$. We restrict our
proofs to the ``cubic domain case'' (see the next section), where
the interesting boundary condition (Dirichlet) is imposed on two
opposite sides, and periodicity in the other two directions. This
choice, introduced in reference \cite{bvcubo} and used in a series
of other papers (see for instance \cite{BDV2,D,Crispo3,CrispoCh}),
is convenient in order to work with a flat boundary and, at the same
time, with a bounded domain. The main reason is that, in proving the
regularity theorem for $p>2$ (see Theorem \ref{main}), we apply the
difference quotients method: we appeal to translations parallel to
the flat boundary, and then retrieve the normal derivatives from the
equations. Then, the simplified framework of a cubic domain avoids
the need of localization techniques and changes of variables. The
results can be extended to smooth domains, by following
\cite{Bsmooth}, \cite{Bsmooth2}, and \cite{BKR}, where the extension
is done for the more involved system of non-Newtonian fluids (see
also \cite{MNR}). See also the Remark \ref{rempmag2}.

\section{Notation and statement of the main results}
Throughout this paper we denote by $\Omega$ a bounded
three-dimensional domain with smooth boundary, which we assume of
class $C^2$, and we consider the usual homogeneous Dirichlet
boundary conditions \be\label{diri} u_{|\partial\Omega}=0.\ee
Further, we denote by $Q$ the cube $Q=\,(\,]0,1[\,)^3$, and by
$\Gamma$ the two opposite faces of $Q$ in the $x_3$-direction, i.e.
$$\Gamma=\{\,x:\,|x_1|<1,\, |x_2|<1,\, x_3=\,0\,\}\,\cup\,\{\,x:\,|x_1|<1,\,
|x_2|<1,\, x_3=\,1\,\}.$$ We impose the Dirichlet boundary
conditions on $\Gamma$ \be\label{bc} u_{|\,\Gamma}\,=\,0, \ee
 and periodicity, with period equal to 1, in
both the $x_1$, $x_2$ directions.

\par By $L^p(\O)$ and $W^{m,p}(\O)$, $m$ nonnegative integer
and $p\in(1,+\infty)$, we denote the usual Lebesgue and Sobolev
spaces, with the standard norms $\|\cdot\|_{L^p(\O)}$ and
$\|\,\cdot\,\|_{W^{m,p}(\O)}$, respectively. We usually denote the
above norms by $\|\cdot\|_{p}$ and $\|\,\cdot\,\|_{m,p}$, when the
domain is clear. Further, we set $\|\cdot\|=\|\cdot\|_{2}$. We
denote by $W^{1,p}_0(\O)$ the closure in $W^{1,p}(\O)$ of
$C^\infty_0(\O)$ and by $W^{-1,p'}(\O)$, $p'=\,p/(p-1)$, the strong
dual of $W^{1,p}_0(\O)$ with norm $\|\,\cdot\,\|_{-1,p'}$. In
notation concerning duality pairings, norms and functional spaces,
we do not distinguish between scalar and vector fields.
\par We set $$V_p(\Omega)=\,\left\{\,v\in W^{1,p}\,(\O):\,
v_{|\pa\O}=0\,\right\},$$and
$$V_p(Q)=\,\left\{\,v\in W^{1,p}\,(Q):\, v_{|\Gamma}=0,
\, v \mbox{ is }x'\,-\,\mbox{periodic}\,\right\}.$$
 By $V_p'(\Omega)$ and $V_p'(Q)$ we denote the dual
spaces of $V_p(\Omega)$ and $V_p(Q)$, respectively.
\par
We use the summation convention on repeated indexes, except for the
index $s$. For any given pair of second order tensors $B$ and $C$,
we write $B\cdot C\equiv B_{ij}\,C_{ij}$.
\par
We denote by the symbols $c$, $c_1$, $c_2$, etc., positive constants
that may depend on $\mu$; by capital letters, $C$, $C_1$, $C_2$,
etc., we denote positive constants independent of $\mu
\geq\,0\,$(eventually, $\,\mu\,$ bounded from above). The same
symbol $c$ or $C$ may denote different constants, even in the same
equation.

\par%
 We
set $\partial_i \,u=\,\frac{\pa\, u}{\pa\, x_i}$,
$\partial_{ij}^2\,u=\,\frac{\pa^2\, u}{\pa\, x_i\pa\,x_j}$. Moreover
we set $(\nabla u)_{ij}=\pa_j\,u_i$ and $(\cD\,u)_{ij}=\frac 12
\left((\nabla u)_{ij}+(\nabla u)_{ji}\right)$. We denote by $D^2u$
the set of all the second partial derivatives of $u$. The symbol
$D^2_*u$ may denote any second-order partial derivative
$\pa_{hk}^2\,u\,$ except for the derivatives $\pa_{33}^2\,u\,$.
Moreover we set \be\label{SDstar} |\,D^2u\,|^2:=\sum_{i,j,k=1}
^3\!\!\left|\,\pa_{jk}^2\,u_i\,\right|^2\quad \mbox{ and } \quad
|\,D^2_*u\,|^2:=\sum_{i,j,k=1\atop (j,k)\not =(3,3)}
^3\!\!\left|\,\pa_{jk}^2\,u_i\,\right|^2\,.\ee%
We define the tensor $S(A)$ as \be\label{tensord}
S(A)=\,(\,\m+|\,A\,|)^{p-2}\,A\,,\ee with $\mu\,\geq \,0$ fixed
constant, $p\,>1$, and $A$ an arbitrary tensor field. It is easily
seen that $S(A)$ satisfies the following property: there exists a
positive constant $C_1$ such that
\begin{equation}\label{tensas}
\frac{\pa S_{i\,j}(A)}{\pa A_{k\,l}}\,B_{i\,j}\,B_{k\,l}\geq\,
C_1\,(\m+\,|\,A\,|)^{p-2}\,|\,B\,|^2\,,
\end{equation}
for any tensor $B$. Further \be\label{tensorS}(S(A)-S(B))\cdot
(A-B)\geq\, C_2\,\frac{|A-B|^2}{(\mu+|A|+|B|)}{\!\atop ^{{2-p}}}\,,\
\ee and \be\label{tensorS1}|\,S(A)-S(B)\,|\leq\,
C_3\,\frac{|A-B|}{(\mu+|A|+|B|)}{\!\atop ^{{2-p}}}\,,\ee for any
pair of tensors $A$ and $B$, with $C_2$ and $C_3$ positive
constants. The proof of the above estimates is essentially contained
in \cite{Giusti}. We also refer to \cite{DER} for a detailed proof.
\par Our aim is to prove the regularity results up to the boundary
given in the theorems below. Let us state our main results. We start
from the case $p>2$.
\begin{theorem}\label{main}
Assume that $p>2$ and $\mu>0$.  Let $f\in L^2(Q)$, and let $u\in
V_p(Q)$ be a weak solution of problem \eqref{NSC}--\eqref{bc} or of
problem \eqref{NSCS}--\eqref{bc}. Then $u\,\in W^{2,2}(Q)\,$.
Moreover, there is a constant $c$ such that%
\be\label{estsdp2}%
\|D^2\,u\,\|\leq c\,\|\,f\,\|\,.\ee
\end{theorem}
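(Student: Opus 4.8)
The plan is to use Nirenberg's difference-quotient method in the two directions $x_1,x_2$ parallel to the flat part $\Gamma$ of the boundary, and then to recover the missing second derivative $\partial^2_{33}u$ algebraically from the system. Fix $s\in\{1,2\}$, let $e_s$ be the corresponding unit vector, and for small $|h|$ let $\Dph$ be the difference quotient in the direction $e_s$. Since the Dirichlet condition on $\Gamma$ and the $x'$-periodicity are both invariant under translations in the $e_s$ direction, $\varphi=-\Dmh(\Dph u)$ belongs to $V_p(Q)$ and is an admissible test function (its $W^{1,p}$-bound, needed to pair with $S(\nabla u)\in L^{p'}(Q)$, is immediate). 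Inserting it in the weak formulation and using that translations commute with $\nabla$ (resp. with $\cD$), the left-hand side becomes $\int_Q\Dph S(\nabla u)\cdot\Dph\nabla u\,dx$ (resp. the same with $\cD$ in place of $\nabla$), while the right-hand side is $-\int_Q f\cdot\Dmh\Dph u\,dx\le\|f\|\,\|\Dmh\Dph u\|\le\|f\|\,\|\Dph\nabla u\|$.

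First step: tangential regularity. On the left-hand side I would apply the monotonicity estimate \eqref{tensorS}: with $A=\nabla u(\cdot+he_s)$, $B=\nabla u$ one has $\Dph S(\nabla u)\cdot\Dph\nabla u\ge C_2(\mu+|A|+|B|)^{p-2}|\Dph\nabla u|^2\ge C_2\,\mu^{p-2}\,|\Dph\nabla u|^2$, where the last step uses exactly $p>2$ and $\mu>0$. Hence $C_2\mu^{p-2}\|\Dph\nabla u\|^2\le\|f\|\,\|\Dph\nabla u\|$, so $\|\Dph\nabla u\|\le c\,\|f\|$ uniformly in $h$; letting $h\to0$ gives $\partial_s\nabla u\in L^2(Q)$ for $s=1,2$, i.e.
\be
\|D^2_*u\|\le c\,\|f\|.
\ee
In the symmetric-gradient case the same computation yields $C_2\mu^{p-2}\|\Dph\cD u\|^2$ on the left and $\|f\|\,\|\Dph\nabla u\|$ on the right, and one closes the estimate by invoking Korn's inequality for the field $\Dph u\in V_2(Q)$ (which vanishes on $\Gamma$), namely $\|\Dph\nabla u\|\le c\,\|\Dph\cD u\|$; this again yields $\|D^2_*u\|\le c\|f\|$.

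Second step: the normal derivative from the equation. Once $D^2_*u\in L^2(Q)$ is known, I would put the system in non-divergence form. Writing it as $-\frac{\partial S_{ij}}{\partial A_{kl}}(\nabla u)\,\partial^2_{jl}u_k=f_i$ (this differentiation being legitimate either after replacing $S$ by a smooth tensor $S^\varepsilon$ satisfying \eqref{tensas}--\eqref{tensorS1} with constants independent of $\varepsilon$ and passing to the limit by monotonicity/uniqueness, or by a further difference-quotient argument), one isolates the terms with $j=l=3$:
\be
M_{ik}(\nabla u)\,\partial^2_{33}u_k=-f_i-R_i,\qquad M_{ik}:=\frac{\partial S_{i3}}{\partial A_{k3}}(\nabla u),
\ee
where $R$ collects only derivatives belonging to $D^2_*u$, so $|R|\le c\,(\mu+|\nabla u|)^{p-2}\,|D^2_*u|$. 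By \eqref{tensas}, applied to the tensor $B$ with $B_{i3}=\xi_i$ and all other entries zero, the symmetric matrix $M$ obeys $M\ge C_1(\mu+|\nabla u|)^{p-2}I\ge C_1\mu^{p-2}I$, hence is invertible with $|M^{-1}\zeta|\le\bigl(C_1(\mu+|\nabla u|)^{p-2}\bigr)^{-1}|\zeta|$. Combined with the bound $|\partial S/\partial A|\le c(\mu+|\nabla u|)^{p-2}$, the factor $(\mu+|\nabla u|)^{p-2}$ cancels and one gets the pointwise estimate $|\partial^2_{33}u|\le c\,|f|+c\,|D^2_*u|$, so $\|\partial^2_{33}u\|\le c\|f\|$. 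The symmetric-gradient case is identical after the observation that, upon symmetrization, its non-divergence form also reads $-\frac{\partial S_{ij}}{\partial A_{kl}}(\cD u)\,\partial^2_{jl}u_k=f_i$, with the corresponding normal-coefficient matrix again $\gtrsim\mu^{p-2}I$ by \eqref{tensas}. Summing the two steps, $\|D^2u\|^2=\|D^2_*u\|^2+\|\partial^2_{33}u\|^2\le c\|f\|^2$, which is \eqref{estsdp2}, and in particular $u\in W^{2,2}(Q)$.

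Beyond the routine points (admissibility and the $h\to0$ passage for the difference quotients, and, for \eqref{NSCS}, Korn's inequality on $Q$ with mixed Dirichlet/periodic boundary conditions), the main obstacle is the second step under the present low regularity: since $S$ is merely Lipschitz, the passage to non-divergence form and the extraction of $\partial^2_{33}u$ need careful justification — this is where I expect the smooth approximation of $S$ with uniform structural constants (or an equivalent mollification) to be required — together with the check that the cancellation between $M^{-1}$ and the growth of $\partial S/\partial A$ holds uniformly, which is exactly what forces the hypotheses $p>2$ and $\mu>0$.
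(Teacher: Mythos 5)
Your proposal is correct and follows essentially the same two-step strategy as the paper: tangential second derivatives via translations parallel to $\Gamma$ combined with the coercivity \eqref{tensas}--\eqref{tensorS} and Korn's inequality (the paper formally differentiates instead of using difference quotients, but explicitly notes this is the same method), and then the pointwise recovery of $\partial^2_{33}u$ from a positive-definite $3\times3$ algebraic system. Your derivation of the positive-definiteness of the normal-coefficient matrix from \eqref{tensas} applied to $B=\xi\otimes e_3$ is a slightly slicker packaging of the paper's explicit computation of the matrix $a_{jl}$, but the substance is identical.
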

This theorem will be proved in the next section. \vskip0.1cm  The
other results concern the case $p<2$. Note that, in this case, the
parameter $\mu$ can be equal to zero, thus covering the
$p$-Laplacian systems. Further, here we consider a general smooth
bounded domain. On the other hand, we restrict our considerations to
the full gradient case.\par  Before stating the regularity theorems
for $p<2$, let us recall two well known inequalities for the Laplace operator.
The first, namely%
\be\label{lad2} \|\,D^2\,v\,\|\leq C_4\,\|\,\Delta\, v\,\|\,,\ee
holds for any function $v\in W^{2,2}(\O)\cap W_0^{1,2}(\O)\,$. Here
$C_4=C_4(\Omega)\,.\,$ Note that if $\O$ is a convex domain, then
$C_4=1$. For details we refer to \cite{Lad} (Chapter I, estimate
20). The second kind of estimates which we are going to use for a
$v\in W^{2,q}(\O)\cap W_0^{1,q}(\O)$, $q\geq 2$, is
\be\label{ladaq}\|D^2\,v\|_{q}\leq \,C_5\|\Delta v\|_q\,,\ee where
the constant $C_5$ depends only on $q$ and $\O$. It relies on
standard estimates for solution of the Dirichlet problem for the
Poisson equation.  Actually, there are two constants $K_1$ and
$K_2$, independent of $q$, such that%
\be\label{yud} K_1\, q\leq C_5\leq K_2\, q\,.\ee%
Similarly, one has \be\label{lad1q}\|\,\,v\,\|_{\,2,q}\leq
\,C\|\Delta v\|_q\,,\ee  where the constant $C$ depends on $q$ and
$\O$. For further details we refer to \cite{kos} and \cite{yud}.
\par
 For $p<\,2\,$ our main
results are the following.
\begin{theorem}\label{teorema}
Let be $\mu\geq 0$, and $\,1\,<p\,\leq\, 2\,$ such that
$(2-p)\,C_4<\,1\,$, where $C_4$ is given by \eqref{lad2}. Let $f \in
L^{\frac{6}{p+1}}(\Omega)$. Then, the unique weak solution $u$ of
problem \eqref{NSC}-\eqref{diri} belongs to $W^{2,2}(\O)$. Moreover,
there is a constant $C$ such that
\begin{equation}
 \|\,u\,\|_{2,2}\leq C\left(\|f\|+\|f\|_{\frac{6}{p+1}}^\frac{1}{p-1}\right)\,.\label{dn}
 \end{equation}
If $\,\O\,$ is convex (or the cubic domain $Q$) the result holds for
any $\,1\,<p\,\leq\, 2\,.$
\end{theorem}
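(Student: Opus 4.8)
Existence and uniqueness of the weak solution $u\in V_p(\O)$ follow from the classical theory of monotone operators, since $v\mapsto-\nabla\cdot S(\nabla v)$ is strictly monotone, coercive and continuous from $V_p(\O)$ onto its dual; thus the real content is the bound \eqref{dn}. I would prove it first as an a priori estimate for a sufficiently smooth solution and then recover the general case by approximation. The algebraic key is the following rewriting of the system: since $S(\nabla u)=(\m+|\nabla u|)^{p-2}\nabla u$, expanding the divergence and dividing by the strictly positive factor $(\m+|\nabla u|)^{p-2}$ gives, componentwise,
\[
\Delta u_i=-(\m+|\nabla u|)^{2-p}f_i+(2-p)\,(\m+|\nabla u|)^{-1}(\pa_j|\nabla u|)\,\pa_j u_i\,.
\]
Using $|\nabla|\nabla u||\leq|D^2u|$ a.e., Cauchy--Schwarz in $j$, and $|\nabla u|\leq\m+|\nabla u|$, the last term is pointwise bounded by $(2-p)|D^2u|$, whence $|\Delta u|\leq(\m+|\nabla u|)^{2-p}|f|+(2-p)|D^2u|$ a.e.

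Next I would combine this with the Laplacian estimate \eqref{lad2}: taking $L^2$ norms, $\|D^2u\|\leq C_4\|\Delta u\|\leq C_4\|(\m+|\nabla u|)^{2-p}f\|+(2-p)C_4\|D^2u\|$, so the hypothesis $(2-p)C_4<1$ lets me absorb the last term and conclude $\|D^2u\|\leq c\,\|(\m+|\nabla u|)^{2-p}f\|$ with $c=C_4/(1-(2-p)C_4)$. When $\O$ is convex (or $Q$) one has $C_4=1$, and $(2-p)C_4<1$ becomes simply $p>1$, which is the last assertion of the theorem.

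It then remains to estimate the right-hand side. Writing $(\m+|\nabla u|)^{2-p}\leq c(\m^{2-p}+|\nabla u|^{2-p})$, the first summand contributes $c\,\m^{2-p}\|f\|$, while to the second I apply H\"older's inequality with exponents $\tfrac{3}{p+1}$ and $\tfrac{3}{2-p}$ (to $|f|^2$ and $|\nabla u|^{2(2-p)}$), obtaining $\||\nabla u|^{2-p}f\|\leq\|f\|_{\frac{6}{p+1}}\|\nabla u\|_{6}^{\,2-p}$ --- which is exactly why the hypothesis $f\in L^{6/(p+1)}(\O)$ is natural. Since $n=3$, $W^{1,2}(\O)\hookrightarrow L^6(\O)$ gives $\|\nabla u\|_6\leq c(\|\nabla u\|+\|D^2u\|)$; testing the equation with $u$ controls $\|\nabla u\|_p$ by $c\|f\|_{6/(p+1)}^{1/(p-1)}$ (note $L^{6/(p+1)}(\O)\hookrightarrow V_p'(\O)$ for $1<p\leq2$), and Gagliardo--Nirenberg interpolation between $L^p$ and $W^{1,2}$ bounds $\|\nabla u\|$ by $c(\|\nabla u\|_p+\|D^2u\|)$. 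Substituting, one reaches an inequality of the type $\|D^2u\|\leq c\m^{2-p}\|f\|+c\|f\|_{6/(p+1)}\bigl(\text{data}+\|D^2u\|\bigr)^{2-p}$; since $0\leq2-p<1$, subadditivity of $t\mapsto t^{2-p}$ together with Young's inequality let me close the estimate and arrive at \eqref{dn}, the term $\|f\|_{6/(p+1)}^{1/(p-1)}$ coming from the absorption of $\|D^2u\|^{2-p}$.

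Finally, this computation is formal, since it presupposes $u\in W^{2,2}(\O)$ (needed, e.g., to make sense of $\nabla|\nabla u|$). To justify it I would regularize, solving $-\ep\Delta u^\ep-\nabla\cdot S(\nabla u^\ep)=f$ (together with $\m_k\downarrow0$ when $\m=0$); for the resulting uniformly elliptic problem standard theory --- e.g.\ tangential difference quotients plus recovering the normal derivatives from the equation --- yields $u^\ep\in W^{2,2}(\O)$. Repeating the argument for $u^\ep$, one checks that all bounds are uniform in $\ep$: the extra term only improves coercivity, and from $((\m+|\nabla u^\ep|)^{p-2}+\ep)\Delta u^\ep=-f-(2-p)(\m+|\nabla u^\ep|)^{-1}(\pa_j|\nabla u^\ep|)\pa_j u^\ep$ one still gets $|\Delta u^\ep|\leq(\m+|\nabla u^\ep|)^{2-p}|f|+(2-p)|D^2u^\ep|$, because $((\m+|\nabla u^\ep|)^{p-2}+\ep)^{-1}\leq(\m+|\nabla u^\ep|)^{2-p}$. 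Passing to the limit by weak compactness in $W^{2,2}$ and a Minty monotonicity argument identifies the limit with $u$ and transfers \eqref{dn}. I expect the main obstacle to be precisely this last step --- arranging the approximation so that the regularized solutions are genuinely $W^{2,2}$ up to the boundary (a delicate point for systems) while keeping all constants independent of $\ep$ and $\m_k$ --- together with the technical interpolation/Young bookkeeping needed for the precise form of \eqref{dn}.
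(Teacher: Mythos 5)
Your proposal is correct and follows essentially the same route as the paper (Propositions \ref{Vteo1}--\ref{Vteo2} and the limit argument of Section 5): divide by $(\mu+|\nabla u|)^{p-2}$ to view the system as a lower-order perturbation of the Laplacian, absorb the $(2-p)C_4\|\Delta u\|$ term using $(2-p)C_4<1$ (with $C_4=1$ in the convex case), bound $\||\nabla u|^{2-p}f\|$ by H\"older with exponents $3/(2-p)$, $3/(p+1)$ plus Sobolev embedding and Young's inequality, and justify everything through the $\eta$-regularized system \eqref{App} and the limits $\eta\to0$ and $\mu\to0$. The only minor deviations are cosmetic: you control $\|\nabla u\|_6$ via $W^{1,2}\hookrightarrow L^6$ together with an energy/interpolation bound on $\|\nabla u\|$, where the paper uses $\|\nabla v\|_6\le C\|\Delta v\|$ directly from \eqref{lad1q}, and you identify the limit by Minty's trick where the paper uses the Lipschitz bound \eqref{tensorS1} with strong $W^{1,p}$ convergence.
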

It is worth noting that in the limit case $p=2$, when system
\eqref{NSC} reduces to the Poisson equations, we recover the well
known result
$$ \|\,u\,\|_{2,2}\leq C\,\|f\|\,.$$

 \vskip0.1cm We set
\be\label{c8} C_6=\max\{C_4, C_5\}\,,\ee and
\be\label{rq}r(q)=\left\{\begin{array}{ll}\dy
 \frac{3q}{3-(3-q)(2-p)}
&\dy \mbox{ if }\ q<3\,,\\
\hskip1cm q & \dy \mbox{ if }\ q> 3\,.
 \end{array}\right .\ee

\begin{theorem}\label{teoremaq}
Let be $\mu\geq 0$, $\,q>2\,$, and $\,1<\,p\leq 2\,$ such that
$\,(2-p)\,C_6<\,1\,$, where $C_6$ is given by \eqref{c8}. Let  $f\in
L^{r(q)}(\O)$ and let $u$ be the unique weak solution of problem
\eqref{NSC}--\eqref{diri}. Then $u$ belongs to $W^{2,q}(\O)$.
Moreover, the following estimate holds
\begin{equation}
 \|u\|_{2,q}\leq C
 \,\left(\|f\|_q+\|f\|_{r(q)}^\frac{1}{p-1}\right)\,.
\label{dnq}
\end{equation}
\end{theorem}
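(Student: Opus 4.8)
The plan is to recast the system \eqref{NSC}--\eqref{ex1} as a Poisson equation for $u$ with a controllable right-hand side, absorb the resulting second-order term by means of the $L^{q}$ elliptic estimate \eqref{ladaq} together with the smallness assumption $(2-p)C_{6}<1$, and finally bootstrap the integrability of $\nabla u$ starting from the $W^{2,2}$-regularity already provided by Theorem \ref{teorema}. First I would record the base case: since $C_{6}=\max\{C_{4},C_{5}\}\ge C_{4}$, the hypothesis gives $(2-p)C_{4}<1$; and since $r(q)\ge r(2)=\tfrac{6}{p+1}$ for every $q\ge2$, we have $f\in L^{r(q)}(\O)\subset L^{6/(p+1)}(\O)$. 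Hence Theorem \ref{teorema} applies and gives $u\in W^{2,2}(\O)$ with the estimate \eqref{dn}; in particular $\nabla u\in L^{6}(\O)$.

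Next I would expand the equation. Writing $b:=(\m+|\nabla u|)^{p-2}$, the identity $-\nabla\cdot(b\,\nabla u)=f$ becomes $-b\,\Delta u-\nabla b\cdot\nabla u=f$, that is
\be\label{e:pde}
-\Delta u=(\m+|\nabla u|)^{2-p}\,f+(p-2)\,\frac{(\nabla|\nabla u|)\cdot\nabla u}{\m+|\nabla u|}\,,
\ee
from which $|\Delta u|\le(\m+|\nabla u|)^{2-p}|f|+(2-p)|D^{2}u|$, using $|\nabla|\nabla u||\le|D^{2}u|$ and $|\nabla u|\le\m+|\nabla u|$. When $\m=0$ the coefficient $b$ is unbounded and the nonlinearity is not Lipschitz at the origin; I would therefore first carry out \eqref{e:pde} and all the a priori estimates below on the regularized problems obtained by replacing $\m$ by $\m+\delta$ (and, where \eqref{ladaq} is used, on smooth approximations of the solution), with constants uniform in $\delta$, and then let $\delta\to0$. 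Applying \eqref{ladaq} in $L^{s}(\O)$ with $2\le s\le q$, and using that $C_{5}=C_{5}(s)$ is non-decreasing, so that $(2-p)C_{5}(s)\le(2-p)C_{6}<1$, I would absorb the $D^{2}u$ term and get
\be\label{e:abs}
\|D^{2}u\|_{s}\le\frac{C_{5}(s)}{1-(2-p)C_{5}(s)}\,\big\|(\m+|\nabla u|)^{2-p}f\big\|_{s}\,;
\ee
controlling the lower-order norms by interpolation between the a priori bound $\|\nabla u\|_{p}\le c\,\|f\|^{1/(p-1)}$ (monotonicity) and \eqref{lad1q}, this yields $\|u\|_{2,s}\le c(s)\big\|(\m+|\nabla u|)^{2-p}f\big\|_{s}+(\text{lower order})$.

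The core of the argument is the bootstrap. Suppose $u\in W^{2,s_{k}}(\O)$ with $2\le s_{k}<q$, so that $\nabla u\in L^{3s_{k}/(3-s_{k})}(\O)$ by Sobolev embedding (and $\nabla u\in L^{\infty}(\O)$ once $s_{k}>3$), and define $s_{k+1}$ by $\tfrac{1}{s_{k+1}}=\tfrac{(2-p)(3-s_{k})}{3s_{k}}+\tfrac{1}{r(q)}$. Using $(\m+|\nabla u|)^{2-p}\le\m^{2-p}+|\nabla u|^{2-p}$, Hölder's inequality, $f\in L^{r(q)}(\O)$, the Sobolev embedding and the inductive bound on $\|u\|_{2,s_{k}}$, one finds
\[
\big\|(\m+|\nabla u|)^{2-p}f\big\|_{s_{k+1}}\le c\,\|f\|_{s_{k+1}}+c\,\|\nabla u\|_{3s_{k}/(3-s_{k})}^{\,2-p}\|f\|_{r(q)}\le c\,\|f\|_{s_{k+1}}+c\,\|u\|_{2,s_{k}}^{\,2-p}\|f\|_{r(q)}\,.
\]
A short computation with \eqref{rq} shows that the recursion $\tfrac1s\mapsto(2-p)\tfrac1s-\tfrac{2-p}{3}+\tfrac{1}{r(q)}$ has $\tfrac1q$ as attracting fixed point (attracting because $0<2-p<1$) — which is precisely the reason for the definition \eqref{rq} of $r(q)$ — and that, starting from $s_{0}=2$ (where $\nabla u\in L^{6}$ is the borderline case), $2<s_{1}<s_{2}<\cdots\uparrow q$. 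Inserting the last display into the estimate for $\|u\|_{2,s_{k+1}}$ and using Young's inequality with exponent $2-p<1$ to absorb the factor $\|u\|_{2,s_{k}}^{2-p}$ against a power of $\|f\|_{r(q)}$, I would obtain $\|u\|_{2,s_{k+1}}\le C_{k}\big(\|f\|_{s_{k+1}}+\|f\|_{r(q)}^{1/(p-1)}+\cdots\big)$, with constants $C_{k}$ that remain uniformly bounded because the iteration converges geometrically and $C_{5}(s_{k})\le C_{5}(q)$. If $q>3$, after finitely many steps $s_{k}>3$, hence $\nabla u\in L^{\infty}(\O)$, and a last application of \eqref{e:abs} with $f\in L^{q}(\O)=L^{r(q)}(\O)$ gives $u\in W^{2,q}(\O)$ and \eqref{dnq}. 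If $2<q\le3$, then $s_{k}\uparrow q$, and since $\|g\|_{L^{s}(\O)}\uparrow\|g\|_{L^{q}(\O)}$ on the bounded domain $\O$, the uniform bound $\sup_{k}\|u\|_{2,s_{k}}<\infty$ upgrades to $u\in W^{2,q}(\O)$, with \eqref{dnq} following (the endpoint $q=3$, where $W^{1,3}\not\hookrightarrow L^{\infty}$, being handled by noting $\nabla u\in L^{m}(\O)$ for every finite $m$).

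The hard part will be keeping the estimates quantitative and uniform throughout the iteration: the absorption leading to \eqref{e:abs} works only under the sharp condition $(2-p)C_{6}<1$, and the integrability gain defining $s_{k+1}$ must be calibrated exactly against the formula \eqref{rq} so that $s_{k}$ converges to — rather than overshoots or stalls below — the target $q$, while the accumulated Young-inequality constants and the growth of $C_{5}$ in its index (linear, by \eqref{yud}) stay under control. The other technical points are the rigorous justification of \eqref{e:pde} and of \eqref{e:abs} from the sole $W^{2,2}$-information, and, in the degenerate case $\m=0$, the $\m$-uniform passage to the limit.
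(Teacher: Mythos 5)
Your a priori estimate scheme is sound in spirit, but note it is not the paper's route: the paper does not iterate over exponents at all. It closes the estimate in one stroke at level $q$: starting from \eqref{reg1} it gets $\|\Delta u\|_q\le (2-p)\|D^2u\|_q+\|f\|_q+\||\nabla u|^{2-p}f\|_q$, estimates the last term by H\"older exactly as in \eqref{cmha} with $\|\nabla u\|_{q^*}^{2-p}\|f\|_{r(q)}$, $q^*=3q/(3-q)$ (resp. $\|\nabla u\|_\infty^{2-p}\|f\|_q$ for $q>3$), and then uses \eqref{lad1q} and Young's inequality to absorb the resulting $\|\Delta u\|_q^{2-p}$ \emph{at the same level} $q$ — this is what the definition \eqref{rq} of $r(q)$ is really calibrated for, rather than for a fixed-point iteration. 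Your bootstrap $s_k\uparrow q$ can be made to work (your fixed-point computation is correct), but it is longer and buys nothing: it needs the constant in \eqref{ladaq} at every intermediate exponent (so the claim $C_5(s)\le C_6$ for $2\le s\le q$ must be justified, e.g. by interpolation between the levels $2$ and $q$, which are the only two the paper uses), a control of the accumulated constants along the iteration, and, for $2<q<3$, a Fatou-type passage $s_k\to q$ since $q$ is never attained in finitely many steps.

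The genuine gap is the rigorous justification of these formal estimates, which in the paper is half of the proof and requires an idea your plan does not contain. Every absorption step (yours at level $s_{k+1}$, the paper's at level $q$) presupposes that $D^2u$ already lies in the corresponding $L^s$ space, while the only available information is $u\in W^{2,2}(\O)$. Your two proposed remedies do not deliver this: replacing $\mu$ by $\mu+\delta$ only removes the degeneracy at $\nabla u=0$ — the regularized problem is still the nonlinear $p$-structure system, for which Theorem \ref{teorema} again gives only $W^{2,2}$, so you are back where you started; and mollifying the solution destroys the equation — a smooth approximation of $u$ does not solve \eqref{NSC}, so the differentiated Poisson-form identity is unavailable for it and the estimates cannot be transferred. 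The paper's device is different: it freezes and mollifies the \emph{coefficients}, studying the auxiliary \emph{linear} elliptic system \eqref{rega1} in a new unknown $w^\ve$, whose coefficients $c^\ve_{ijhk}$ are smooth and satisfy the uniform bound \eqref{b}; linear elliptic theory gives $w^\ve\in W^{2,q}(\O)$, the formal estimate then applies to $w^\ve$ with constants uniform in $\ve$ and $\mu$, one extracts a weak $W^{2,q}$ limit $w$, shows $w$ solves \eqref{rega1w}, and finally identifies $w=u$ by the uniqueness argument (multiplying the equation for $V=u-w$ by $\Delta V$ and using the same absorption, which again needs $(2-p)C_6<1$). Without this linearization-and-identification step, or an equivalent approximation argument, your bootstrap cannot start at any level $s>2$, and the proof is incomplete.
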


\vskip0.1cm
\begin{corollary}\label{corollaryq2}
Let  $p$, $\mu$ and $f$ be as in Theorem \ref{teoremaq}. Then, if
$q>3$, the weak solution of problem \eqref{NSC}--\eqref{diri}
belongs to $ C^{1,\alpha}(\overline\O)$, for $\alpha= 1-\frac 3q$.
\end{corollary}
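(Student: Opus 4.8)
The plan is to obtain the corollary as an immediate consequence of Theorem~\ref{teoremaq} together with the Sobolev embedding theorem. First I would apply Theorem~\ref{teoremaq} with the prescribed exponent $q>3$. Since by definition \eqref{rq} one has $r(q)=q$ for $q>3$, the hypothesis on the data reads $f\in L^q(\O)$, and the theorem yields that the unique weak solution $u$ of \eqref{NSC}--\eqref{diri} belongs to $W^{2,q}(\O)$, together with the quantitative bound $\|u\|_{2,q}\leq C\big(\|f\|_q+\|f\|_q^{1/(p-1)}\big)$ coming from \eqref{dnq}.

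Next I would invoke the classical Sobolev--Morrey embedding in dimension $n=3$: since $q>3$ and $\O$ is a bounded domain of class $C^2$, one has $W^{1,q}(\O)\hookrightarrow C^{0,1-3/q}(\overline\O)$, the embedding being continuous up to the boundary (for instance via a bounded extension operator $W^{2,q}(\O)\to W^{2,q}(\R^3)$, whose existence is guaranteed by the $C^2$-regularity of $\pa\O$). Applying this embedding to $u$ itself and to each first-order partial derivative $\pa_i u$, which belong to $W^{1,q}(\O)$ by the conclusion of Theorem~\ref{teoremaq}, gives $u\in C^{1,\alpha}(\overline\O)$ with $\alpha=1-\frac 3q$, and the H\"older norm of $u$ is controlled by $\|u\|_{2,q}$, hence ultimately by the right-hand side of \eqref{dnq}.

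There is essentially no obstacle in this argument; the only point worth recording is that the assumed $C^2$-regularity of the boundary (or the flatness of $\Gamma$ in the cubic domain) is more than sufficient for the embedding to hold up to $\overline\O$. The restriction $q>3$ in the hypotheses of Theorem~\ref{teoremaq} is precisely what makes the exponent $\alpha=1-\frac 3q$ positive, so the value of $\alpha$ in the statement is the natural one produced by this embedding.
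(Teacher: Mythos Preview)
Your argument is correct and is precisely the route the paper intends: it states that the corollary is an immediate consequence of Theorem~\ref{teoremaq} and leaves the details to the reader, and your use of the Sobolev--Morrey embedding $W^{2,q}(\O)\hookrightarrow C^{1,1-3/q}(\overline\O)$ for $q>3$ is exactly that immediate consequence.
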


Note that, in \eqref{rq}, $r(q)>q$ for any $q<3$. It is worth noting
that $r(q)$ tends to the same value $3$ as $q$ tends to $3\,,$ from
below and from above. Furthermore, if $q=2\,,$ the estimate
\eqref{dnq} becomes simply \eqref{dn}. Finally, in estimates
\eqref{dn} and \eqref{dnq}, the terms $\|f\|$ and $\|f\|_q$ can be
replaced by $1$.
\begin{remark} One could also consider the case where $f\in L^3(\Omega)$. We omit this
further case and leave it to the interested reader. In this regard
we stress that our interest mostly concerns the maximal
integrability of the second derivatives of the solution.
\end{remark}

\begin{remark}
When $p<2$ we could extend  to system \eqref{NSCS} the regularity
results up to the boundary obtained for system \eqref{NSC}, by
requiring a smallness condition on a suitable norm of $f$. Actually,
following arguments already used in \cite{crigri} and \cite{crigri2}
for non-Newtonian fluids, the idea is to study the regularity for
solutions of suitable approximating linear problems and then prove
the regularity for solutions of the nonlinear problem, by employing
the method of successive approximations. For brevity, here we avoid
this further development.
\end{remark}
\section{The $W^{2,2}(Q)$-regularity: $p>2$ and $\mu>0$}
In this section we prove Theorem \ref{main}. Therefore, throughout
the section we work in the cubic domain $Q$. Let us introduce the
definition of weak solutions of both the problems \eqref{NSC} and
\eqref{NSCS}.
\begin{definition}\label{noita}
Assume that $f \in V_p'(Q)$. We say that $u$ is a {\rm{weak
solution}} of problem \eqref{NSC}--\eqref{bc},
 if $\,u\,\in\, V_p(Q)$ satisfies
\begin{equation}\label{buf2a}
\int_{Q}\ S(\nabla \,u) \cdot \nabla\, \vp \,dx\ =\,\int_{Q}\, f\,
\cdot\,  \vp \,dx\,,
\end{equation}
for all $\vp \in \,V_p(Q)$.
\end{definition}

\begin{definition}\label{noit}
Assume that $f \in V_p'(\Omega)$. We say that $u$ is a {\rm{weak
solution}} of problem \eqref{NSCS}--\eqref{bc}
 if $\,u\,\in\, V_p(\O)$ satisfies
\begin{equation}\label{buf2}
\int_{Q}\ S(\cD \,u) \cdot \cD\, \vp \,dx\ =\,\int_{Q}\, f\, \cdot\,
\vp \,dx\,,
\end{equation}
for all $\vp \in \,V_p(Q)$.
\end{definition}

\par We recall that the existence and uniqueness of a weak solution
can be obtained by appealing to the theory of monotone operators,
following J.-L. Lions \cite{lions}.
\par In proving Theorem \ref{main} we focus on the symmetric
gradient case, since the full gradient case is, in some respects,
easier to handle. Hence we assume that $S$ is given by
$$S(\cD\,u)=\,(\,\m+|\,\cD\,u\,|)^{p-2}\,\cD\,u\,,$$ with $\mu>0$
and $p>2$.
\par
We follow arguments used in \cite{D}, in the context of
non-Newtonian fluids. Therefore, we will try to preserve the
notations. However in \cite{D} (due to the divergence free
constraint) the symbol $D^2_*u$ has a slightly different meaning
from that introduced in definition \eqref{SDstar} below, since it
also includes the derivatives $\partial_{33}^2\,u_3$ (see (2.8) in
\cite{D}).
\par As in in \cite{D}, in order to avoid arguments already
developed in other papers by the authors, we replace the use of
difference quotients simply by differentiation.
\par It is an easy matter to obtain the following Korn's type inequality, proceeding, for instance, as in
the proof given in \cite{pares}.
\begin{lemma}
There exists a constant $C$ such that
$$\|\,u\,\|_p\,+\,\|\,\nabla u\,\|_p\,\leq \,C\,\|\,\cD\,
u\,\|_p\,,$$%
for all $\,u \in V_{p}(Q)$. \label{TKorn}
\end{lemma}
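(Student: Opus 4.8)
The plan is to reduce the estimate to the classical Korn inequality on a flat torus by means of an even/odd reflection across the faces $\Gamma$, the Dirichlet condition being exactly what makes the reflected field admissible. First, since each $u\in V_p(Q)$ vanishes on the two faces $x_3=0$ and $x_3=1$, integration along the $x_3$-segments (first for smooth $u$, then by density) yields the Poincar\'e inequality $\|\,u\,\|_p\leq C\,\|\,\pa_3 u\,\|_p\leq C\,\|\,\nabla u\,\|_p$. Hence it suffices to prove $\|\,\nabla u\,\|_p\leq C\,\|\,\cD u\,\|_p$.

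Next I would extend $u$ across the planes $x_3=0$ and $x_3=1$ by taking $u_1,u_2$ \emph{even} and $u_3$ \emph{odd} in $x_3$. Since $u_3$ vanishes on $\Gamma$ and $u$ is $x'$-periodic, the resulting field $\widetilde u$ belongs to $W^{1,p}$ of the three-dimensional flat torus $\T^3$ (the $x_3$-circle having length $2$), with $\|\,\nabla\widetilde u\,\|_p\leq C\,\|\,\nabla u\,\|_p$. Checking parities, $(\cD\widetilde u)_{\alpha\beta}$ for $\alpha,\beta\in\{1,2\}$ and $(\cD\widetilde u)_{33}$ are the even extensions of the corresponding components of $\cD u$, while $(\cD\widetilde u)_{\alpha3}$, $\alpha\in\{1,2\}$, is the odd extension; in particular $\|\,\cD\widetilde u\,\|_p\leq C\,\|\,\cD u\,\|_p$. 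The key point is that no ``wrong'' combination such as $\pa_3 u_\alpha-\pa_\alpha u_3$ is produced by this reflection, since such a term would only be controlled by the full gradient $\|\,\nabla u\,\|_p$.

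It then remains to invoke the Korn inequality on $\T^3$, namely $\|\,\nabla\widetilde u\,\|_p\leq C\,\|\,\cD\widetilde u\,\|_p$. On the torus this is straightforward: passing to Fourier coefficients, $\widehat{\nabla\widetilde u}(\xi)$ is obtained from $\widehat{\cD\widetilde u}(\xi)$, for $\xi\neq0$, through a matrix-valued Fourier multiplier homogeneous of degree $0$ and smooth away from the origin, so the bound follows from the Marcinkiewicz-Mikhlin multiplier theorem (for $p=2$ it is just Plancherel's identity). Chaining the three steps gives $\|\,u\,\|_p+\|\,\nabla u\,\|_p\leq C\,\|\,\nabla u\,\|_p\leq C\,\|\,\nabla\widetilde u\,\|_p\leq C\,\|\,\cD\widetilde u\,\|_p\leq C\,\|\,\cD u\,\|_p$, which is the assertion.

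The only ingredient that is not entirely routine is the $L^p$-multiplier estimate on $\T^3$; it can be bypassed by instead using the classical $L^p$-Korn inequality with lower order term on the Lipschitz domain $Q$, i.e. $\|\,\nabla u\,\|_p\leq C(\|\,\cD u\,\|_p+\|\,u\,\|_p)$ — obtained from the identity $\pa_{jk}^2 u_i=\pa_j(\cD u)_{ik}+\pa_k(\cD u)_{ij}-\pa_i(\cD u)_{jk}$ together with Ne\v{c}as' negative-norm lemma, as in \cite{pares} — and then absorbing $\|\,u\,\|_p$ via the usual Lions compactness argument; the latter applies here because the only rigid displacement $a+b\wedge x$ belonging to $V_p(Q)$ is $0$ ($x'$-periodicity forces $b=0$, and $u_{|\Gamma}=0$ then forces $a=0$).
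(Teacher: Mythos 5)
Your argument is correct, but it takes a different route from the one the paper points to. The paper gives no self-contained proof: it simply invokes the argument of Par\'es \cite{pares}, which is precisely your fallback — the second Korn inequality $\|\nabla u\|_p\leq C(\|\cD u\|_p+\|u\|_p)$ obtained from the identity $\pa^2_{jk}u_i=\pa_j(\cD u)_{ik}+\pa_k(\cD u)_{ij}-\pa_i(\cD u)_{jk}$ and Ne\v{c}as' negative-norm lemma, followed by a compactness argument in which one checks that the only rigid displacement in $V_p(Q)$ is zero (your verification that $x'$-periodicity kills $b$ and the trace condition kills $a$ is exactly the needed point). Your primary argument — even/odd reflection of $(u_1,u_2,u_3)$ across the faces $x_3=0,1$ to land on the flat torus, where the parity bookkeeping shows each component of $\cD\widetilde u$ is an even or odd extension of the corresponding component of $\cD u$, followed by the Fourier-multiplier form of Korn's inequality on $\T^3$ — is a genuinely different proof that exploits the special cubic geometry. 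It buys you a constructive constant (no contradiction/compactness step) and makes transparent why the Dirichlet condition on $u_3$ is the essential hypothesis for the reflection, at the price of relying on the Marcinkiewicz--Mikhlin theorem for $p\neq 2$ and of being tied to the flat, periodic setting; the Ne\v{c}as route is the one that generalizes to arbitrary Lipschitz domains. Both arguments are sound; only two small points deserve a word in a written version: the zero Fourier mode is harmless because $\int_{\T^3}\nabla\widetilde u\,dx=0$ by periodicity, and in the compactness alternative one should normalize $\|\nabla u_n\|_p=1$ and use the already-established Poincar\'e inequality to control $\|u_n\|_p$ before extracting the Rellich subsequence.
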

\begin{lemma}
There exists a constant $C$ such that
$$\|\,D^2_*\,u\,\|_p\leq C\,\|\, \nabla_*\cD \, u\|_p \,,$$
for all $u\in V_p(Q)\,$. \label{l3.1}
\end{lemma}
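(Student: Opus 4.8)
The plan is to establish the estimate $\|D^2_*u\|_p \le C\|\nabla_*\cD u\|_p$ by reducing it, componentwise, to the one-dimensional situation in the periodic directions, where the relevant second derivatives of $u$ are genuinely controlled by derivatives of $\cD u$, and then invoking the flatness of $\Gamma$ together with the boundary condition $u_{|\Gamma}=0$ to handle the mixed derivatives in the normal direction. First I would recall the pointwise identity expressing second derivatives of $u$ in terms of first derivatives of $\cD u$: for all indices one has
\be\label{identityS}
\pa^2_{jk}\,u_i = \pa_j (\cD u)_{ik} + \pa_k (\cD u)_{ij} - \pa_i (\cD u)_{jk}\,,
\ee
which follows directly from $(\cD u)_{ik} = \tfrac12(\pa_k u_i + \pa_i u_k)$. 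The point is that \eqref{identityS} lets us write any $D^2_*u$ — i.e.\ any $\pa^2_{jk}u_i$ with $(j,k)\neq(3,3)$ — as a combination of derivatives $\pa_m(\cD u)_{\cdot\cdot}$ in which at least one of the derivatives on the right-hand side is a tangential derivative $\pa_1$ or $\pa_2$, except possibly for the single term $\pa_i(\cD u)_{jk}$; so the naive argument controls $D^2_*u$ by $\nabla\cD u$, not by $\nabla_*\cD u$, and one has to work to eliminate the spurious $\pa_3$-contributions.

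The second step is to remove those $\pa_3$-contributions. The terms requiring care are those of the form $\pa_3(\cD u)_{jk}$ with $j,k$ not both $3$; equivalently $\pa_3(\cD u)_{11},\ \pa_3(\cD u)_{12},\ \pa_3(\cD u)_{22},\ \pa_3(\cD u)_{13},\ \pa_3(\cD u)_{23}$. For the first three, $(\cD u)_{jk}$ with $j,k\in\{1,2\}$ is a combination of $\pa_l u_i$ with $l,i\in\{1,2\}$, so $\pa_3(\cD u)_{jk}$ is a combination of $\pa^2_{3l}u_i$ with $l\in\{1,2\}$; these are themselves of type $D^2_*u$ arising from tangential differentiation, and in fact each equals a component of $\nabla_*\cD u$ directly (e.g.\ $\pa^2_{31}u_2 = \pa_3\pa_1 u_2$, and $\pa_1 u_2$ is, up to $\pa_2 u_1$, a tangential first derivative of $\cD u$, whence applying $\pa_1$ to $(\cD u)_{23}=\tfrac12(\pa_3 u_2+\pa_2 u_3)$ is not what is needed — rather one uses $\pa_1 (\cD u)_{12}$ plus symmetry). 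The cleanest route is: differentiate the identity $(\cD u)_{jk}$ tangentially first, so that $\pa^2_{3l}u_i$ for $l\in\{1,2\}$ is $\pa_l$ applied to a first derivative of $u$, and $\pa_3 u_i$ for $i\in\{1,2\}$ — where $u_i$ vanishes on $\Gamma$ together with its tangential derivatives — is handled via $\pa_3 u_i = 2(\cD u)_{i3} - \pa_i u_3$, so $\pa_l\pa_3 u_i = 2\pa_l(\cD u)_{i3} - \pa_l\pa_i u_3$; here $\pa_l(\cD u)_{i3}$ is a component of $\nabla_*\cD u$, while $\pa_l\pa_i u_3$ with $l,i\in\{1,2\}$ is a purely tangential second derivative, hence controlled by $\nabla_*\cD u$ via the already-treated in-plane case (or trivially, since $\pa_l\pa_i u_3 = \pa_l(\pa_i u_3)$ and $\pa_i u_3 = 2(\cD u)_{i3} - \pa_3 u_i$ again — one closes the loop observing the only genuinely new quantities are the $\pa_l(\cD u)_{\cdot\cdot}$ with $l\in\{1,2\}$, i.e.\ exactly $\nabla_*\cD u$).

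I would organize this as a finite bookkeeping over the components of $D^2_*u$, in each case exhibiting $\pa^2_{jk}u_i$ as an explicit linear combination of components of $\nabla_*\cD u = \pa_1\cD u$ and $\pa_2\cD u$, using only \eqref{identityS}, the symmetry $(\cD u)_{ab}=(\cD u)_{ba}$, and the observation that the "missing" derivative $\pa_3\pa_3 u_3$ never appears (that is precisely why $(3,3)$ is excluded). Taking $L^p(Q)$ norms then yields the claimed inequality with $C$ an absolute combinatorial constant. The main obstacle is the combinatorial care needed to verify that, after these substitutions, no term $\pa_3 f$ with $f$ a component of $\cD u$ other than $(\cD u)_{33}$ survives, and that no $\pa^2_{33}u_3$ is generated along the way — this is where the structure of the symmetric gradient (as opposed to the full gradient, which would be trivial here) and the choice of the cubic domain with flat boundary $\Gamma$ both enter. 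Density of smooth fields in $V_p(Q)$ justifies carrying out the computation for smooth $u$ first and passing to the limit.
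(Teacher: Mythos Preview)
Your approach has a genuine gap: the pointwise algebraic reduction you propose cannot succeed. The inequality $\|D^2_*u\|_p\le C\|\nabla_*\cD u\|_p$ is \emph{not} a pointwise estimate --- there is no linear identity expressing each component of $D^2_*u$ as a combination of components of $\nabla_*\cD u=(\pa_1\cD u,\pa_2\cD u)$. To see this, take $u=(x_1x_3,\,0,\,-\tfrac12 x_1^2)$. Then
\[
\cD u=\begin{pmatrix}x_3&0&0\\0&0&0\\0&0&0\end{pmatrix},\qquad \pa_1\cD u=\pa_2\cD u=0,
\]
so $\nabla_*\cD u\equiv 0$, while $\pa^2_{13}u_1=1$ and $\pa^2_{11}u_3=-1$ are nonzero components of $D^2_*u$. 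Concretely, the only place $\pa^2_{13}u_1$ and $\pa^2_{11}u_3$ appear among the twelve components of $\nabla_*\cD u$ is in the single combination $\pa_1(\cD u)_{13}=\tfrac12(\pa^2_{13}u_1+\pa^2_{11}u_3)$, so they cannot be separated pointwise. Your ``closing the loop'' step is precisely where the substitutions collapse to a tautology rather than a useful identity; you would find $\pa^2_{11}u_3=\pa^2_{11}u_3$ after unwinding.

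The boundary condition is therefore essential in an integral, not pointwise, way. The argument the paper has in mind (it cites \cite{D} and gives as a hint ``$\pa_s u=0$ on $\Gamma$, $s=1,2$'') is simply to observe that for $s=1,2$ the tangential derivative $\pa_s u$ again belongs to $V_p(Q)$ --- it is $x'$-periodic and vanishes on $\Gamma$ because $u$ does --- and then to apply Korn's inequality (Lemma~\ref{TKorn}) to $\pa_s u$:
\[
\|\nabla\pa_s u\|_p\le C\,\|\cD(\pa_s u)\|_p=C\,\|\pa_s\cD u\|_p,\qquad s=1,2.
\]
Summing over $s=1,2$ gives exactly $\|D^2_*u\|_p\le C\|\nabla_*\cD u\|_p$, since $\{\nabla\pa_1 u,\nabla\pa_2 u\}$ comprises all second derivatives $\pa^2_{jk}u_i$ with $(j,k)\neq(3,3)$. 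This is where the Dirichlet condition on the flat boundary $\Gamma$ actually enters.
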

This result reproduces Lemma 3.1 in \cite{D}, adapted to the new
definition of $D^2_*\,u\,$. Note that $\pa_s\,u=0$ on $\Gamma$,
$s=1,2$. \par Actually, the above two lemmas hold for each $p>1$.
\vskip0.2cm  Define, for $s = 1, 2$, \be\label{3.6} J_s(u)\,
:=\,\int_{Q} \nabla\cdot\left[\left(\mu+|\cD\,
u\,|\right)^{p-2}\cD\, u\right]\cdot \pa_{ss}^2u\, dx\,, \ee and
\be\label{3.10} I_s(u)\, :=\,\int_{Q} \left(\mu+|\cD\,
u\,|\right)^{p-2}\,|\pa_s\cD\, u\,|^2\, dx\,.\ee

\begin{lemma}\label{IJ} For any smooth function $u\in V_p(Q)$
the following inequality holds true \be\label{FI}J_s(u)\geq
C_1\,I_s(u)\,,\ee with the constant $C_1$ given by \eqref{tensas}.
 \end{lemma}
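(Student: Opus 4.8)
The plan is to integrate by parts in $J_s(u)$, moving the divergence onto the test function $\pa_{ss}^2 u$, and to exploit that translations parallel to $\Gamma$ preserve the boundary conditions, so that $\pa_s u \in V_p(Q)$ for $s=1,2$. Concretely, write $S(\cD u)=(\mu+|\cD u|)^{p-2}\cD u$ and observe that $J_s(u)=\int_Q (\nabla\cdot S(\cD u))\cdot \pa_{ss}^2 u\,dx$. Since $\pa_s u$ vanishes on $\Gamma$ and is $x'$-periodic, the function $\pa_s u$ is an admissible field, and the boundary terms arising in the integration by parts either vanish on $\Gamma$ (because $\pa_s u=0$ there for $s=1,2$) or cancel by periodicity in the $x_1,x_2$ directions. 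Thus, after one integration by parts in $x_s$,
\be\label{pp1}
J_s(u)\,=\,-\int_Q S(\cD u)\cdot \nabla(\pa_{ss}^2 u)\,dx\,=\,\int_Q \pa_s S(\cD u)\cdot \nabla(\pa_s u)\,dx\,,
\ee
where a second integration by parts in $x_s$ was used (again no boundary contribution, for the same reason).

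Next I would note that the integrand can be rewritten in terms of the symmetric gradient: since $S(\cD u)$ is a symmetric tensor, $S(\cD u)\cdot \nabla \psi = S(\cD u)\cdot \cD\psi$ for any $\psi$, hence $\pa_s S(\cD u)\cdot \nabla(\pa_s u)=\pa_s S(\cD u)\cdot \cD(\pa_s u)=\pa_s S(\cD u)\cdot \pa_s \cD u$. Now $S$ depends on $\cD u$ only, so by the chain rule $\pa_s S_{ij}(\cD u)=\frac{\pa S_{ij}}{\pa A_{kl}}(\cD u)\,\pa_s(\cD u)_{kl}$, and therefore
\be\label{pp2}
J_s(u)\,=\,\int_Q \frac{\pa S_{ij}}{\pa A_{kl}}(\cD u)\,\pa_s(\cD u)_{kl}\,\pa_s(\cD u)_{ij}\,dx\,.
\ee
Applying the ellipticity bound \eqref{tensas} pointwise with $A=\cD u$ and $B=\pa_s \cD u$ gives $\frac{\pa S_{ij}}{\pa A_{kl}}(\cD u)\,\pa_s(\cD u)_{kl}\,\pa_s(\cD u)_{ij}\geq C_1(\mu+|\cD u|)^{p-2}|\pa_s \cD u|^2$. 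Integrating this inequality over $Q$ yields exactly $J_s(u)\geq C_1 I_s(u)$, which is \eqref{FI}.

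The routine but necessary point to be careful about is the justification of the two integrations by parts and the identity $\pa_s S(\cD u)=\frac{\pa S}{\pa A}(\cD u):\pa_s \cD u$; these are legitimate because the lemma is stated for smooth $u\in V_p(Q)$, so all derivatives exist classically and $S$, although only assumed Lipschitz in general, is differentiable along the smooth path $x\mapsto \cD u(x)$ for a.e.\ point (and the differential inequality \eqref{tensas} holds wherever the derivative exists, with the complementary null set being harmless under the integral). The only genuine subtlety — and the step I expect to require the most care — is the boundary-term bookkeeping in \eqref{pp1}: one must verify that no contribution from $\Gamma$ survives when integrating $\pa_{ss}^2 u$ by parts, which rests precisely on the fact that we only differentiate in the tangential directions $s\in\{1,2\}$ (so $\pa_s u=0$ on $\Gamma$) and on the $x'$-periodicity, and it is exactly here that the cubic-domain framework does the work that would otherwise demand localization and flattening of the boundary.
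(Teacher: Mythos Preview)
Your argument is correct and follows essentially the same route as the paper: two integrations by parts to reach $\int_Q \pa_s S(\cD u)\cdot \pa_s\nabla u\,dx$, the symmetry of $S(\cD u)$ to replace $\pa_s\nabla u$ by $\pa_s\cD u$, then the chain rule and \eqref{tensas}. One small wording correction: the first integration by parts in your display \eqref{pp1} is in the divergence index $x_j$, not in $x_s$, so the vanishing of the $\Gamma$-contribution there comes from $\pa_{ss}^2 u=0$ on $\Gamma$ (two tangential derivatives of a function vanishing on $\Gamma$), while it is the second integration by parts that is genuinely in $x_s$ and handled by periodicity.
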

 \begin{proof}
 Integrating twice by
parts in \eqref{3.6}  one gets $$J_s(u)=\,\int_{Q}
\pa_s\left[\left(\mu+|\cD\, u\,|\right)^{p-2}\cD\, u\right]\cdot\,
\pa_s \nabla\,u\,dx\,.$$ Note that, due to symmetry, we replace
$\pa_s\, \nabla\,u$ by $\pa_s\,\cD\,u$. From the above expression,
one has
$$J_s(u)=
\,\int_{Q} \frac{\partial}{\partial
D_{kl}}\,\left[(\m+\,|D|)^{p-2}\,D_{ij}\right] \frac{\pa (\cD
u)_{kl}}{\pa x_s}\frac{\pa (\cD u)_{ij}}{\pa x_s} \,dx\,,
 $$ where the derivatives
with respect to $\,D_{k\,l}\,$ are evaluated at the point $D=\,\cD\,
u$. Note that here we merely appeal to the chain rule. Then the
result follows by using estimate \eqref{tensas}.
\end{proof}

Next we prove the following result which, roughly speaking, shows
that the second tangential derivatives of $u$ are square integrable.
\begin{lemma}\label{tan}
Assume that $f\,\in\,L^2(Q)$ and let $u$ be the solution of problem
\eqref{NSCS}--\eqref{bc}. Then $D^2_*u\,\in L^2(Q)$ and
\be\label{fe1} \|D^2_*u\,\|\leq \frac{
c}{\mu^{\,p-2}}\,\|\,f\,\|\,.\ee
\end{lemma}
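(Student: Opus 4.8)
The plan is to test the weak formulation \eqref{buf2} with the test function $\vp=-\pa_{ss}^2 u$ for $s=1,2$, summed over $s$ (rigorously one works with difference quotients $\Delta_{-h}\Delta_h u$ in the tangential directions, but following the convention of the paper we simply differentiate). This is an admissible test function up to the usual approximation argument: since translations in $x_1,x_2$ preserve $V_p(Q)$ and $u$ vanishes on $\Gamma$, the tangential second derivatives are legitimate. Integrating by parts moves one tangential derivative onto $S(\cD u)$ and the other onto $\cD\vp$, producing exactly $\sum_{s=1,2} J_s(u)$ on the left-hand side. By Lemma \ref{IJ}, $\sum_s J_s(u)\geq C_1\sum_s I_s(u)$, so the left side is bounded below by $C_1\sum_{s=1,2}\int_Q (\mu+|\cD u|)^{p-2}|\pa_s\cD u|^2\,dx$.

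For the right-hand side, $\int_Q f\cdot(-\pa_{ss}^2 u)\,dx$ is handled by integrating by parts once to get $\int_Q \pa_s f\cdot\pa_s u\,dx$ — but this requires $f\in W^{1,\cdot}$, which we do not have. Instead one keeps $\int_Q f\cdot(-\pa_{ss}^2 u)\,dx$ and estimates it by $\|f\|\,\|D^2_* u\|$ (again, at the difference-quotient level this is $\|f\|\,\|\Delta_h\nabla u\|$, which is controlled once we know the left side controls $\|D^2_* u\|^2$; the standard bootstrap via a parameter absorbs this). The crucial point is to convert the degenerate left-hand side into a genuine bound on $\|D^2_* u\|^2$. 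Since $p>2$, we have $(\mu+|\cD u|)^{p-2}\geq \mu^{p-2}$ pointwise, hence
\be\label{plan-lower}
C_1\sum_{s=1,2} I_s(u)\geq C_1\,\mu^{p-2}\sum_{s=1,2}\int_Q |\pa_s\cD u|^2\,dx = C_1\,\mu^{p-2}\,\|\nabla_*\cD u\|^2\,.
\ee
Then Lemma \ref{l3.1} gives $\|D^2_* u\|_2\leq C\|\nabla_*\cD u\|_2$, so the left side dominates $c\,\mu^{p-2}\|D^2_* u\|^2$.

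Putting these together yields $c\,\mu^{p-2}\|D^2_* u\|^2\leq \|f\|\,\|D^2_* u\|$, whence $\|D^2_* u\|\leq c\,\mu^{-(p-2)}\|f\|$, which is exactly \eqref{fe1}. To make the test-function step rigorous one first establishes everything for the tangential difference quotients $\Delta_h u$, obtaining a bound uniform in $h$, and then passes to the limit $h\to 0$ to conclude $D^2_* u\in L^2(Q)$ with the stated estimate; alternatively one argues by the Galerkin/monotone-operator approximation and passes to the limit. The main obstacle is precisely this rigor: justifying that $-\pa_{ss}^2 u$ (or $\Delta_{-h}\Delta_h u$) is an admissible test function and that the integrations by parts in Lemma \ref{IJ} produce no boundary contributions on $\Gamma$ — this uses that $\pa_s u=0$ on $\Gamma$ for $s=1,2$ (noted after Lemma \ref{l3.1}) and the periodicity in $x_1,x_2$, so all boundary terms from tangential integrations by parts vanish. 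A secondary, more technical point is that Lemma \ref{IJ} is stated for smooth $u$, so one must first work with the regularized solutions and only at the end invoke lower semicontinuity of the relevant norms. Note that this estimate is only a partial result: it controls all second derivatives except $\pa_{33}^2 u$; recovering the normal-normal derivative from the equation \eqref{NSCS} itself is the remaining ingredient for the full $W^{2,2}$ bound in Theorem \ref{main}.
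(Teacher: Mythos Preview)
Your proposal is correct and follows essentially the same route as the paper: test with $-\pa_{ss}^2 u$ (for $s=1,2$), invoke Lemma~\ref{IJ} to bound the left-hand side below by $C_1\sum_s I_s(u)$, use $(\mu+|\cD u|)^{p-2}\geq\mu^{p-2}$ to extract $\mu^{p-2}\|\nabla_*\cD u\|^2$, and finish with Lemma~\ref{l3.1}. The only cosmetic difference is that the paper closes the inequality in terms of $\|\pa_s\cD u\|$ by applying the Korn-type Lemma~\ref{TKorn} to $\pa_s u$ on the right-hand side, whereas you close directly in terms of $\|D^2_* u\|$; both versions are equivalent.
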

\begin{proof}
Multiply both sides of the equations \eqref{NSC} by $\pa_{ss}^2\,u$,
$s=1,2$, and integrate over $Q$. By appealing to \eqref{3.6} and
Lemma \ref{IJ} it  readily follows that
$$I_s(u)\leq\, c\,\|\,f\,\|\, \|\,\pa_{ss}^2\,u\,\|\leq c
\,\|\,f\,\|\, \|\,\nabla\,\pa_{s}\,u\,\|\,,$$ hence, from Lemma
\ref{TKorn} applied to $\pa_s\,u$,
$$I_s(u)\leq c\,\|\,f\,\|\,\|\,\pa_{s}\,\cD\,u\,\|\,.$$
Finally, observing that
$$\mu^{p-2}\,\|\,\pa_{s}\,\cD\,u\,\|^2\leq I_s(u)\,,$$
one gets
$$\|\,\pa_{s}\,\cD\,u\,\|\leq\, \frac{c}{\mu}{\atop^{p-2}}\,\|\,f\,\|\,.$$
Application of Lemma \ref{l3.1}, gives the result.
\end{proof}
In order to complete the proof of Theorem \ref{main} we have to show
the integrability of the remaining second derivatives, namely the
normal derivatives $\pa_{33}^2\,u$. In doing this we follow the
argument used in the paper \cite{bvlali}; we express these
derivatives, pointwisely, in terms of the derivatives of $u$ already
estimated, and solve the corresponding system in the unknowns
$\pa_{33}^2\,u_i$, $i=1,2,3$. Note that the main differences between
this situation and that in reference \cite{bvlali}, are the
following: in \cite{bvlali} the $L^2$- integrability of
$\pa_{33}^2\,u_3$ is known thanks to the divergence free constraint,
$\pa_{33}^2\,u_3=-\pa_{31}^2\,u_1-\pa_{32}^2\,u_2$. Hence the
$3\,\times\,3$ linear system considered below is replaced, in
\cite{bvlali}, by a $2\,\times\,2$ linear system in the unknowns
$\pa_{33}^2\,u_i$, $i=1,2$. On the other hand, in reference
\cite{bvlali}, the presence of the pressure prevents the full
$W^{2,2}$-regularity.\par%
For the missing derivatives we prove the following lemma.
\begin{lemma}\label{mis}
Let $u$ be the solution of problem \eqref{NSCS}--\eqref{bc}. Then
the  vector field  $\partial_{33}^2\,u$ satisfies the pointwise
estimate \be
|\,\pa_{33}^2u\,|\leq\,c\,\left(\,\frac{1}{\mu}{\atop^{p-2}}\,|f\,|+
\, |\,D^2_*u\,|\right)\,,\ \mbox{a.e. in } Q\,.\ee\label{fe}
\end{lemma}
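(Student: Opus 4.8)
The plan is to regard the system $-\partial_j S_{ij}(\cD u)=f_i$, $i=1,2,3$, as a pointwise algebraic relation among the second derivatives of $u$ and to solve it for the three ``normal'' components $\partial_{33}^2 u_m$ in terms of $f$ and of the derivatives $D^2_*u$ already estimated in Lemma~\ref{tan}. As in Lemma~\ref{tan}, the computation is first carried out on smooth solutions of the regularized problems, the general statement following by passing to the limit.

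By the chain rule, $\partial_j S_{ij}(\cD u)=\dfrac{\partial S_{ij}}{\partial A_{kl}}(\cD u)\,\partial_j(\cD u)_{kl}$, and since $\partial_j(\cD u)_{kl}=\tfrac12(\partial_{jl}^2u_k+\partial_{jk}^2u_l)$, a derivative $\partial_{33}^2u_m$ appears only for $j=3$ and only through the entries $(3,3)$, $(m,3)$, $(3,m)$ of $\partial_3(\cD u)$, its coefficient in $\partial_3(\cD u)_{kl}$ being $\tfrac12(\delta_{km}\delta_{l3}+\delta_{k3}\delta_{lm})$. Separating these terms, one obtains
\begin{equation*}
-\,\partial_j S_{ij}(\cD u)\;=\;M_{im}\,\partial_{33}^2 u_m\;+\;N_i,\qquad
M_{im}:=-\tfrac12\Big(\tfrac{\partial S_{i3}}{\partial A_{m3}}+\tfrac{\partial S_{i3}}{\partial A_{3m}}\Big)(\cD u),
\end{equation*}
where $N_i$ is \emph{linear} in the entries of $D^2_*u$, with coefficients among the $\partial S_{ij}/\partial A_{kl}(\cD u)$. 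Since $-\partial_j S_{ij}(\cD u)=f_i$, this is the $3\times3$ linear system $M_{im}\,\partial_{33}^2 u_m=f_i-N_i$. With $S_{ij}(A)=(\mu+|A|)^{p-2}A_{ij}$ a short computation gives, writing $b:=|\cD u|$,
\begin{equation*}
-M_{im}=\tfrac12(\mu+b)^{p-2}\big(\delta_{im}+\delta_{i3}\delta_{m3}\big)+(p-2)(\mu+b)^{p-3}\,\frac{(\cD u)_{i3}\,(\cD u)_{m3}}{b}\,,
\end{equation*}
which is symmetric and, since $p>2$, equals a positive multiple of the identity plus two positive semidefinite rank one terms; hence $\xi^{T}(-M)\xi\geq\tfrac12(\mu+b)^{p-2}|\xi|^2\geq\tfrac12\mu^{p-2}|\xi|^2$ for every $\xi\in\R^3$. (Equivalently, with $B_{kl}:=\tfrac12(\xi_k\delta_{l3}+\xi_l\delta_{k3})$ and the symmetries of $\partial S/\partial A$, $\xi^{T}(-M)\xi=\frac{\partial S_{ij}}{\partial A_{kl}}(\cD u)B_{ij}B_{kl}\geq C_1(\mu+b)^{p-2}|B|^2=\tfrac{C_1}{2}(\mu+b)^{p-2}(|\xi|^2+\xi_3^2)$ by \eqref{tensas}, a form which does not use the explicit shape of $S$.) Thus $M(\cD u)$ is invertible a.e.\ with $\|M(\cD u)^{-1}\|\leq \dfrac{2}{(\mu+b)^{p-2}}$.

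It remains to bound $N_i$. One has $|\partial S_{ij}/\partial A_{kl}(\cD u)|\leq C(\mu+b)^{p-2}$, the possibly singular factor $(\cD u)_{kl}(\cD u)_{ij}/b$ being dominated by $b\leq\mu+b$; and, being linear in $D^2_*u$, $N_i$ satisfies $|N_i|\leq C(\mu+b)^{p-2}|D^2_*u|$. Consequently
\begin{equation*}
|\,\partial_{33}^2 u\,|=|M^{-1}(f-N)|\leq\frac{2}{(\mu+b)^{p-2}}\Big(|f|+C(\mu+b)^{p-2}|D^2_*u|\Big)\leq\frac{c}{\mu^{p-2}}\,|f|+c\,|D^2_*u|\,,
\end{equation*}
where in the last step $p>2$ is used to bound $(\mu+b)^{p-2}\geq\mu^{p-2}$ in the term containing $f$. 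This is the claimed pointwise estimate.

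The delicate point is the second paragraph: one must account for every occurrence of $\partial_{33}^2u$ --- those coming from the Laplacian like part $(\mu+|\cD u|)^{p-2}\partial_j(\cD u)_{ij}$ and those produced by differentiating the scalar factor $(\mu+|\cD u|)^{p-2}$ --- and then verify that the coefficient matrix $M(\cD u)$ is uniformly negative definite. The apparent singularity $1/|\cD u|$ is harmless since it always multiplies $(\cD u)_{i3}(\cD u)_{m3}$, but the index bookkeeping together with the symmetries of $\partial S/\partial A$ must be handled carefully; this is also where the hypotheses $p>2$ and $\mu>0$ enter.
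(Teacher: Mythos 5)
Your proposal is correct and follows essentially the same route as the paper: both isolate the occurrences of $\pa_{33}^2u_m$ in the pointwise form of the equations, obtain a $3\times3$ linear system whose symmetric coefficient matrix (your $-2B^{2-p}M$ is exactly the paper's matrix $a_{jl}$) is uniformly positive definite thanks to $p>2$, and bound the remaining terms by $\mu^{2-p}|f|+c\,|D^2_*u|$. The only cosmetic difference is that the paper normalizes by $B^{2-p}$ and uses the bound $a_{jl}\xi_j\xi_l\geq|\xi|^2$ with $\xi=\pa_{33}^2u$, while you invert $M$ directly with the operator-norm bound $\|M^{-1}\|\leq 2(\mu+b)^{2-p}$.
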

\begin{proof}
Straightforward calculations show that
\be\label{315}\begin{array}{ll}\vspace{1ex}\dy \pa_s\,[(\mu+|\cD\,
u|)^{p-2}\,\cD\, u]=(\mu+|\cD\, u|)^{p-2}\,\pa_s\,\cD\, u\\\hskip3cm
\dy + \,(p-2)(\mu+|\cD\, u|)^{p-3}\,|\cD\, u|^{-1}\left(\cD\, u\cdot
\pa_s\,\cD\, u\right)\cD\, u\,.\end{array}\ee
 For convenience, we
set $\cD_{jk}=(\cD\, u)_{jk}$ and $B\,:=\,(\mu+|\cD \,u|)$. By using
\eqref{315}, the $j^{th}$ equation \eqref{NSC}, for any $j=1,2,3$,
  takes the following form
  \be
 B^{p-2}\left(\pa^2_{kk}u_j+\pa^2_{jk}u_k\right)
+(p-2)B^{p-3}|\cD\,
 u|^{-1}\cD_{lm}\cD_{jk}\left(\pa^2_{km}u_l+\pa^2_{kl}u_m\right)
 =-2f_j.\label{S1}\ee
Let us write the previous three equations as a system in the
unknowns $\pa_{33}^2\,u_j$. For $j=1,2$ we have

 \be\label{S2}
\begin{array}{ll}\vspace{1ex}\dy
B^{p-2}\,\pa^2_{33}\,u_j+2(p-2)\,B^{p-3}\,|\cD\,
 u|^{-1}\,\cD_{j3}\,\sum_{l=1}^3\,\cD
 _{l3}\,\pa^2_{33}\,u_l =\,F_j\,-2\,f_j\,,
\end{array}\ee
where \be\label{S3}
\begin{array}{ll}\vspace{1ex}\dy
F_j:=&\displaystyle-\,B^{p-2}\,\sum_{k=1}^2\,\pa^2_{kk}\,u_j-B^{p-2}\,\sum_{k=1}^3\,\pa^2_{jk}\,u_k
\\&\displaystyle -2(p-2)\,B^{p-3}\,|\cD\,
 u|^{-1}
\!\!\sum_{l,m,k=1\atop (m,k)\not=(3,3)
}^3\pa^2_{km}\,u_l\,\cD_{jk}\,\cD_{lm}\,.
\end{array}\ee
For $j=3$ we have
 \be\label{S4}
\begin{array}{ll}\vspace{1ex}\dy
2B^{p-2}\,\pa^2_{33}\,u_j+2(p-2)\,B^{p-3}\,|\cD\,
 u|^{-1}\,\cD_{j3}\,\sum_{l=1}^3\,\cD
 _{l3}\,\pa^2_{33}\,u_l =\,F_j\,-2\,f_j\,,
\end{array}\ee
where, for $j=3$,  \be\label{S5}
\begin{array}{ll}\vspace{1ex}\dy
F_j:=&\displaystyle-
B^{p-2}\,\sum_{k=1}^2\,\pa^2_{kk}\,u_j-\,B^{p-2}\,\sum_{k=1}^2\,\pa^2_{jk}\,u_k
\\&\displaystyle -2(p-2)\,B^{p-3}\,|\cD\,
 u|^{-1}
\!\!\sum_{l,m,k=1\atop (m,k)\not=(3,3)
}^3\pa^2_{km}\,u_l\,\cD_{jk}\,\cD_{lm}\,.
\end{array}\ee
The equations \eqref{S2}, for $j=1,2$, together with the equation
\eqref{S5} for $j=3$ can be treated as a $3\times 3$ linear system
in the unknowns $\pa^2_{33}u_j$, $j=1,2,3$. Multiply all three
equations by $B^{2-p}$. We denote the elements of the matrix
$A=A(x)$ associated with this system as $a_{jl}$, where $j,l=1,2,3$.
Then, we can write the system in a compact form as
\be\label{sistema1} a_{jl}\,\pa^2_{33}\,u_l=G_j\,,\ee where the
elements of the matrix of the system are given by
$$a_{jl}:=\delta_{jl}+2(p-2)\,\left(B\,|\cD\, u|\right)^{-1}\cD_{j3}\cD_{l3}\,,$$
 for $j=1,2$,  by
$$a_{jl}:=2\delta_{jl}+2(p-2)\,\left(B\,|\cD\, u|\right)^{-1}\cD_{j3}\cD_{l3}\,,$$
and for $j=3$, and \be\label{G}
 G_j:=B^{2-p}\left(\,F_j\,-2\,f_j\,\right)\,.\ee
 Note that
$a_{jl}=a_{lj}$; moreover, if $\xi$ denotes any vector field then
$$a_{jl}\xi_j\xi_l=|\,\xi\,|^2+\xi_3^2+2\,(p-2)\,\left(\,B\,|\cD\,
u|\,\right)^{-1} [\,{\mathcal{D}}\, u\cdot\xi\,]_{3}^2\,.$$ Hence,
the matrix $A=(a_{jl})$ is also definite positive, a.e. in $x\in Q$,
and the previous identity shows that
$$a_{jl}\,\xi_j\,\xi_l\geq\,|\,\xi\,|^2\,.$$ By setting
$\xi=\pa_{33}^2\,u$, we have obtained \be\label{ovv1}
|\,\pa_{33}^2\,u\,|^2\leq \,|\,G\,|\,|\,\pa_{33}^2\,u\,|,\
\mbox{a.e. in } Q\,,\ee where, obviously, by $G$ we mean the vector
$(G_1,G_2,G_3)$. Noting that, from \eqref{G}, \eqref{S3} and
\eqref{S5}, there holds \be\label{ovv2} |\,G_j\,|\leq
\frac{2}{\mu}{\atop^{p-2}}\,|f_j\,|+ c\, |\,D^2_*u\,|\,,\ \mbox{a.e.
in } Q\,,\ee from this estimate and \eqref{ovv1} we get \eqref{fe}.
\end{proof}
Finally, by combining \eqref{fe1} and \eqref{fe} we readily obtain
$$\|D^2\,u\,\|\leq \frac{c}{\mu}{\atop^{p-2}}\,\|\,f\,\|\,$$
which is just \eqref{estsdp2}. The proof of Theorem \ref{main} is
accomplished.

\section{A regularity result for an approximating system: $p<2\,$.}
In the sequel we introduce an auxiliary positive parameter $\eta$
and study the regularity for solutions of the following
approximating problem
\begin{equation}\left\{
\begin{array}{ll}\vspace{1ex}
-\eta\,\Delta v-\nabla \cdot S\,(\nabla\,v) =\,f,\ \mbox{ in } \O\,,
\\%
v=\,0,\ \mbox{ on } \partial \O\,,
\end{array}\right .
\label{App}
\end{equation}
with $S$ defined by \eqref{tensord},  $\eta>0$, $\mu>0$ and
$p\in(1,2)$. The solutions $v_\eta$ satisfy the estimate
\eqref{mainest1} below, with the constant $C$ independent of $\eta$.
This allows us to show that, as $\eta\to 0$, $v_\eta$ tends, in a
suitable sense, to the solution  $v$ of problem \eqref{App} with
$\eta=0$. A similar situation occurs, with respect to $\mu$, as
$\mu\to 0$.
\par We explicitly note that we introduce the above model just to
approximate our solution by smooth functions.
\par Let us introduce the definition of weak solution of
 both the problems \eqref{App} and \eqref{NSC}--\eqref{diri}.
\begin{definition}\label{wsapp}
Assume that $f \in V_2'(\O)$. We say that $v$ is a {\rm{weak
solution}} of problem \eqref{App} if $v\in V_2(\O)$ and satisfies
\be\label{buf3} \eta\int_{\O}\nabla v\cdot \nabla \varphi\,
dx+\int_{\Omega}\, S(\nabla\,v) \cdot {\nabla} \varphi \,dx
=\,\int_{\Omega}\ f \cdot \varphi \,dx\,, \ee for all $\varphi \in
V_2(\O)$.
\end{definition}

\begin{definition}\label{noitaa}
Assume that $f \in V_p'(\O)$. We say that $u$ is a {\rm{weak
solution}} of problem \eqref{NSC}--\eqref{diri},
 if $\,u\,\in\, V_p(\O)$ satisfies
\begin{equation}\label{buf2aa}
\int_{\O}\ S(\nabla \,u) \cdot \nabla\, \vp \,dx\ =\,\int_{\O}\, f\,
\cdot\,  \vp \,dx\,,
\end{equation}
for all $\vp \in \,V_p(\O)$.
\end{definition}
\par As recalled in the previous section, the existence and uniqueness of a weak
solution is known from the theory of monotone operators.
\par
We start by proving the $W^{2,2}$-regularity result stated in
Proposition \ref{Vteo1} below. In \eqref{mod222}, the dependence of
the constant $c$ on $\Omega_0$, $\eta$ and $\mu$ is omitted since
the aim of the proposition is just to ensure that second derivatives
are well defined a.e. in $\Omega$. Following the notations
introduced in section 2, by capital letters, $C$, $C_1$, $C_2$,
etc., we denote positive constants independent of $\mu$ and of
$\eta$ also.

\begin{proposition}\label{Vteo1}
Let $p\in\left (1,2\right)$, $f \in L^{2}(\Omega)$, and $\,v$ be a
weak solution of problem \eqref{App}. Then $\,v\,\in
W^{2,2}_{loc}(\O)$ and, for any fixed open set
$\Omega_0\subset\subset \O$, there exists a constant $c$ such that
\begin{equation}\label{mod222}\|\,D^2 v\,\|_{L^2(\O_0)}\leq c\,\|f\|\,.\end{equation}
\end{proposition}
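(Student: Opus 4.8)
The plan is to obtain interior $W^{2,2}$ regularity for the approximating problem \eqref{App} by the standard difference-quotient (Nirenberg) method, exploiting the fact that the $\eta\,\Delta v$ term provides a uniformly elliptic principal part, so that the degenerate/singular behaviour of $S$ only contributes a monotone term of the correct sign. Since we are only asked for local regularity and the constant is allowed to depend on $\Omega_0$, $\eta$ and $\mu$, there is no boundary issue and no need to track constants carefully.

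First I would fix a cut-off function $\theta\in C_0^\infty(\Omega)$ with $\theta\equiv 1$ on $\Omega_0$ and $\mathrm{supp}\,\theta\subset\subset\Omega$, and for $h$ small and a fixed coordinate direction $e_k$ I would use the test function $\vp=\Dmh(\theta^2\,\Dph v)$ in the weak formulation \eqref{buf3}. (Here $\Dph w(x)=(w(x+he_k)-w(x))/h$.) This is admissible because for $|h|$ small enough the support of $\vp$ stays inside $\Omega$ and $\vp\in V_2(\Omega)$. Discrete integration by parts moves one difference quotient onto each factor. The $\eta$-term yields $\eta\int \nabla(\Dph v)\cdot\nabla(\theta^2\Dph v)\,dx = \eta\int \theta^2|\nabla\Dph v|^2 + \eta\int 2\theta\,\Dph v\,\nabla\theta\cdot\nabla\Dph v$, so after absorbing the cross term with Young's inequality we control $\tfrac{\eta}{2}\int\theta^2|\nabla\Dph v|^2$. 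The $S$-term becomes $\int \Dph\big(S(\nabla v)\big)\cdot\nabla(\theta^2\Dph v)\,dx$, and I would split $\nabla(\theta^2\Dph v)=\theta^2\nabla\Dph v + 2\theta\,\Dph v\,\nabla\theta$. The leading piece $\int\theta^2\,\Dph(S(\nabla v))\cdot\Dph(\nabla v)\,dx$ is nonnegative: writing $\Dph(S(\nabla v))(x)=\big(S(\nabla v(x+he_k))-S(\nabla v(x))\big)/h$ and $\Dph\nabla v(x)=\big(\nabla v(x+he_k)-\nabla v(x)\big)/h$, this integrand equals $h^{-2}\big(S(A)-S(B)\big)\cdot(A-B)\ge 0$ by monotonicity \eqref{tensorS}, so it can simply be dropped (bounded below by $0$). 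The remaining piece $\int 2\theta\,\Dph v\,\nabla\theta\cdot\Dph(S(\nabla v))\,dx$ is estimated using the Lipschitz bound \eqref{tensorS1} with $\mu>0$: $|\Dph(S(\nabla v))|\le C_3\mu^{p-2}|\Dph\nabla v|$ (since $p<2$ and $\mu>0$ give $(\mu+|A|+|B|)^{p-2}\le\mu^{p-2}$), so this term is bounded by $c\,\mu^{p-2}\int|\theta||\nabla\theta||\Dph v||\Dph\nabla v|$, which by Young is absorbed partly into $\tfrac{\eta}{4}\int\theta^2|\nabla\Dph v|^2$ at the cost of a constant (depending on $\eta,\mu$) times $\int|\nabla\theta|^2|\Dph v|^2\le c\|\nabla v\|^2$. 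Finally the right-hand side $\int f\cdot\Dmh(\theta^2\Dph v)\,dx$ is bounded, after one more discrete integration by parts, by $\|f\|\,\|\nabla(\theta^2\Dph v)\|\le\|f\|\big(c\|\theta^2\nabla\Dph v\|+c\|\nabla v\|\big)$, and the $\|\theta^2\nabla\Dph v\|$ factor is again absorbed by Young into the elliptic term.

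Collecting these estimates gives $\eta\int\theta^2|\nabla\Dph v|^2\,dx\le c\big(\|f\|^2+\|\nabla v\|^2\big)$ with $c=c(\Omega_0,\eta,\mu)$, uniformly in $h$ and in the direction $e_k$. Since $\nabla v\in L^2$ (from testing \eqref{buf3} with $\vp=v$, which gives $\eta\|\nabla v\|^2+\int S(\nabla v)\cdot\nabla v\le\|f\|\|v\|$, and the $S$-term is $\ge 0$, so $\|\nabla v\|\le c\|f\|$), the right-hand side is finite and bounded by $c\|f\|^2$. The standard characterization of $W^{1,2}$ by uniformly bounded difference quotients then shows $\partial_k\nabla v\in L^2(\Omega_0)$ for every $k$, i.e. $v\in W^{2,2}_{loc}(\Omega)$, together with the estimate $\|D^2v\|_{L^2(\Omega_0)}\le c\|f\|$, which is \eqref{mod222}.

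The main obstacle is purely bookkeeping: one must verify that every time $\nabla\Dph v$ appears with a factor that is not controlled, it carries enough of a small coefficient (via the $\eta$-ellipticity and Young's inequality) to be absorbed into $\eta\int\theta^2|\nabla\Dph v|^2$, rather than into something not yet estimated; and one must be slightly careful that the test function $\Dmh(\theta^2\Dph v)$ has support inside $\Omega$ for $|h|$ small, which is why the cut-off $\theta$ is essential and why only interior regularity is claimed here. No structural difficulty arises from the $p$-Laplacian term precisely because $\mu>0$ makes $S$ globally Lipschitz and because its monotone contribution has a favourable sign; the genuinely degenerate case $\mu=0$ and the passage $\eta\to 0$, $\mu\to 0$ are handled later with the $\eta$-uniform estimates.
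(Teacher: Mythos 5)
Your proof is correct and follows essentially the same route as the paper: the paper tests (formally, with derivatives in place of difference quotients) against $-\nabla\cdot(\zeta^2\,\nabla v)$ with a cut-off $\zeta$, keeps the sign of the nonlinear term, and absorbs all $\epsilon$-terms into the uniformly elliptic $\eta$-term, exactly as you do with the test function $\Delta_{-h}(\theta^2\Delta_h v)$. The only cosmetic difference is that you use the monotonicity and Lipschitz bounds \eqref{tensorS}--\eqref{tensorS1} (with $(\mu+|A|+|B|)^{p-2}\le\mu^{p-2}$) for the difference quotients of $S$, whereas the paper works with the differentiated structure condition \eqref{tensas} and the upper bound on $\partial S_{ij}/\partial A_{kl}$; this changes nothing of substance.
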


\begin{proof} As in the previous section, we formally use
derivatives instead of difference quotients, to make the computation
simpler.  Fix an open set $\Omega_0\subset\subset \O$. Let $\zeta$
be a $C_0^2(\Omega)$-function, such that $0\leq\,\zeta(x) \leq\,1$
in $\Omega$, and $\zeta(x)=\,1$ in $\Omega_0$. Multiplying the first
three equations in \eqref{App} by $-\,\nabla \cdot\,(\zeta^2\,\nabla
\,v)\,$ and integrating over $\Omega$ we get
\begin{equation}\label{gesseag}
\begin{array}{ll}\displaystyle\vspace{1ex}
\eta\,\int_{\Omega} \partial^2_{jj}\,v_i\,
\partial_h\left(\zeta^2 \,\partial_h\,v_i\right)\,dx+ \int_{\Omega} \partial_j
\left[(\mu+\,|\nabla \,v|)^{p-2}\,(\nabla\, v)_{i\,j}\right]\,
\partial_h\left(\zeta^2 \,\partial_h\,v_i\right)\,dx\\
\displaystyle\hskip 1cm
=-\,\int_{\Omega}f_i\,\partial_h\,\left(\zeta^2
\,\partial_h\,v_i\right)\,dx\,.
\end{array}\end{equation}
By integration by parts, with respect to $x_j$ and $x_h$, on the
left-hand side one has
\begin{equation}\label{gesse}
\begin{array}{ll}\displaystyle\vspace{1ex}
\eta\,\int_{\Omega} (\partial^2_{jh}\,v_i)^2\, \zeta^2 \,dx
+\,\int_{\Omega}
\partial_h\,\left[(\mu+\,|\nabla v|)^{p-2}\,(\nabla
v)_{i\,j}\right]\,\partial_h (\nabla v)_{i\,j}\,\zeta^2\,dx
\\ \displaystyle\hfill
=-\eta\,\int_{\Omega}
\left(\partial^2_{jh}\,v_i\,\right)\,R_{i\,j\,h}(x) \,dx-
\int_{\Omega}
\partial_h\,\left[(\m+\,|\nabla v|)^{p-2}\,(\nabla v)_{i\,j}\right]
\,R_{i\,j\,h}(x)\,dx\\ \displaystyle\hfill -\,\int_{\Omega}f_i
\left(\,
\partial^2_{hh}\,v_i\,\right)\zeta^2\,dx-2\,\int_{\Omega}f_i \left(\,
\partial_h\,v_i\right)\,\zeta\,\left(\pa_h\,\zeta\right)\,dx=\,\sum_{i=1}^4 I_i\,,
\end{array}
\end{equation}
where, with obvious notation, $R_{i\,j\,h}$ are lower order terms
satisfying estimates
\begin{equation}\label{erros}
|R_{i\,j\,h}(x)| \leq\,c\,|\zeta|\,|\nabla \,\zeta|\,|\nabla v|\,.
\end{equation}
As in the proof of Lemma \ref{IJ}, it is easy to verify, by
appealing to \eqref{tensas}, that

\begin{equation}\label{jess3}
\,\int_{\Omega}
\partial_h\,\left[(\mu+\,|\nabla v|)^{p-2}\,(\nabla
v)_{i\,j}\right]\,\partial_h (\nabla v)_{i\,j}\,\zeta^2\,dx\geq\,c
\,\,\int_{\Omega} \,\left(\mu+\,|\nabla v|\right)^{p-2}\,|D^2
v|^2\,\zeta^2\,dx\,.
\end{equation}On the other hand, by H\"{o}lder's and Cauchy-Schwartz inequalities,
\begin{equation}\label{bem3}
|I_1| \leq\, \,\epsilon \,\|\,|D^2
v|\,\zeta\|^2\,+\,c(\epsilon)\,\|\nabla\,\zeta\|^2_{\infty}\,\|\nabla\,v\|^2\,,
\end{equation}
\begin{equation}\label{bem4}
|I_3| \leq\,
\,\epsilon\,\|\,|D^2v|\,\zeta\|^2\,+\,c(\epsilon)\,\|f\|^2\,,
\end{equation}
and
\begin{equation}\label{bem5}
|I_4| \leq\, c\,\|\nabla \,\zeta\|_{\infty}\|f\|\,\|\nabla \,v\|\,.
\end{equation}
Further, by using the estimate $$ \frac{\pa S_{i\,j}(A)}{\pa
A_{k\,l}}\leq\, c\,(\m+\,|\,A\,|)^{p-2},$$ we have
\begin{equation}\label{idois}
|I_2| \leq\,c\,\int_{\Omega} (\m+\,|\nabla v|)^{p-2}\,|D^2
\,v|\,|\zeta|\,|\nabla \,\zeta|\,|\nabla v|\,dx\,,
\end{equation}
and, by the Cauchy-Schwartz inequality,
\begin{equation}\label{bem}
|I_2| \leq\, \,\epsilon
\,\|\,|D^2v|\,\zeta\|^2+\,c(\epsilon)\,\|\nabla\,\zeta\|^2_{\infty}\,\|\nabla\,v\|^2_2\,.
\end{equation}
From \eqref{gesse} together with $\|\nabla\,v\|\leq c\,\|f\|$, it
follows that
\begin{equation}\label{ies2}
\begin{array}{ll}\vspace{1ex}\dy
\|\,|D^2v|\,\zeta\|\leq c\,\|f\|\,.
\end{array}
\end{equation}
Hence \eqref{mod222} holds.
\end{proof}
\par
Our next step is to get a global estimate for the $L^2$-norm of the
second derivatives, uniform in $\eta$. This is the aim of
 the following proposition.
\begin{proposition}\label{Vteo2}
Let be $\,(2-p)C_4<\,1$, with $C_4$ given by \eqref{lad2}. Let $f
\in L^{\frac{6}{p+1}}(\Omega)$, and let $v$ be a weak solution of
problem \eqref{App}. Then  $v$ belongs to $W^{2,2}(\O)$. Moreover,
there exists a constant  $C$ such that
\begin{equation}\label{mainest1}\|\,v\,\|_{2,2}
\leq C\left(\|f\|+\|f\|_{\frac{6}{p+1}}^{\frac
{1}{p-1}}\right)\,.\end{equation}
\end{proposition}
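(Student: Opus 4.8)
The plan is to test the equation \eqref{App} with the ``Laplacian'' multiplier $-\Delta v$ — or, more precisely, to work first at the level of difference quotients (or with the already-justified localized second derivatives from Proposition \ref{Vteo1}) so that all manipulations are legitimate, and only pass to the limit at the end. Multiplying the three equations in \eqref{App} by $-\partial_{hh}^2 v_i$ and integrating over $\Omega$, the $\eta$-term produces $\eta\|D^2 v\|^2$ after one integration by parts (using $v\in W^{2,2}\cap W_0^{1,2}$ locally from Proposition \ref{Vteo1}, and the boundary regularity of $\Omega\in C^2$ to handle the boundary terms — here one needs the standard fact that $\partial_h v$ is an admissible test direction tangentially and the normal piece is controlled via the equation, exactly as in \eqref{lad2}). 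The $S$-term, integrated by parts, yields $\int_\Omega \partial_h[(\mu+|\nabla v|)^{p-2}(\nabla v)_{ij}]\,\partial_h(\nabla v)_{ij}\,dx$, which by \eqref{tensas} (chain rule, as in Lemma \ref{IJ} and \eqref{jess3}) is bounded below by $C_1\int_\Omega(\mu+|\nabla v|)^{p-2}|D^2 v|^2\,dx \ge 0$; we simply discard it as nonnegative. The right-hand side gives $\int_\Omega f_i\,\partial_{hh}^2 v_i\,dx = -\int_\Omega f\cdot\Delta v\,dx$.

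The crucial point is the \emph{boundary term} in the first integration by parts of the $S$-term: unlike in the cubic-domain case, on a curved boundary the tangential second derivatives of $v$ do not vanish, and one is left with an expression involving $\Delta v$ that must be compared with $\|D^2 v\|$ rather than discarded. This is exactly where the hypothesis $(2-p)C_4<1$ enters. The strategy is the one behind \eqref{lad2}: write $\|D^2 v\|\le C_4\|\Delta v\|$, and from the equation \eqref{App} extract $\Delta v$ in the form
\[
\eta\,\Delta v = -f - \nabla\cdot S(\nabla v) = -f - (\mu+|\nabla v|)^{p-2}\Delta v - (\text{terms linear in }D^2 v\text{ with coefficients }O((2-p)(\mu+|\nabla v|)^{p-2})),
\]
so that, schematically, $(\eta+(\mu+|\nabla v|)^{p-2})\Delta v + (\text{lower-order in the }p\text{-structure}) = -f$ plus correction terms bounded by $(2-p)(\mu+|\nabla v|)^{p-2}|D^2 v|$. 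Dividing through and using $\|D^2 v\|\le C_4\|\Delta v\|$ one obtains an inequality of the form $\|D^2 v\|\le (2-p)C_4\|D^2 v\| + C(\|f\|+\text{lower order})$, and the smallness $(2-p)C_4<1$ lets us absorb. (For the cubic domain $Q$, $C_4=1$ works for all $p<2$ because the boundary terms vanish by periodicity and the Dirichlet condition on $\Gamma$, which is why the statement of Theorem \ref{teorema} relaxes the hypothesis there.)

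The remaining work is to control the ``lower order'' contributions so that the final bound reads $\|f\|+\|f\|_{6/(p+1)}^{1/(p-1)}$. The point is that the correction terms carry a factor $(\mu+|\nabla v|)^{p-2}$, which since $p<2$ is \emph{small where $|\nabla v|$ is large}; on the region where $|\nabla v|$ is bounded the factor is bounded by $\mu^{p-2}$ but multiplied by $\eta$-free constants. One splits $\Omega$ according to $\{|\nabla v|\le 1\}$ and $\{|\nabla v|>1\}$: on the latter $(\mu+|\nabla v|)^{p-2}\le |\nabla v|^{p-2}$, and one estimates $\int |\nabla v|^{p-2}|D^2 v||\nabla v|\cdots$ by Hölder with the Sobolev embedding $W^{1,2}\hookrightarrow L^6$ applied to $\nabla v$ (this is why $L^{6/(p+1)}$ is the natural space for $f$: $6/(p+1)$ is the conjugate exponent making $\int f\cdot v$ and the nonlinear terms scale correctly with $\|v\|_{2,2}$ and $\|\nabla v\|_6$). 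Combining with the a priori bound $\|\nabla v\|\le c\|f\|$ (from testing \eqref{App} with $v$ and Korn/Poincaré) and the $W^{2,2}$-Poisson estimate, one absorbs and arrives at \eqref{mainest1} with $C$ independent of $\eta$ (and of $\mu$, by keeping track that every use of $\mu^{p-2}$ is avoided by the dyadic splitting). The main obstacle is, as indicated, the careful bookkeeping of the boundary term and of the correction terms so that the absorption constant is exactly $(2-p)C_4$ and no spurious $\mu$- or $\eta$-dependence creeps in; everything else is a routine Hölder–Sobolev estimate.
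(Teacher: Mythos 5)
Your operative argument (the second and third paragraphs of your plan) is essentially the paper's proof: thanks to Proposition \ref{Vteo1} the equation \eqref{App} can be written pointwise a.e.\ in $\Omega$; one isolates $\bigl(\eta+(\mu+|\nabla v|)^{p-2}\bigr)\Delta v$, bounds the remaining $p$-structure term by $(2-p)(\mu+|\nabla v|)^{p-2}|D^2v|$, drops the $\eta$-term by sign, divides by the weight, and obtains in $L^2$ the inequality $\|\Delta v\|\le (2-p)\|D^2v\|+\||\nabla v|^{2-p}f\|+\|f\|$; the first term is absorbed using \eqref{lad2} and $(2-p)C_4<1$, the second is estimated by $\|\nabla v\|_6^{2-p}\|f\|_{6/(p+1)}\le C\|\Delta v\|^{2-p}\|f\|_{6/(p+1)}$ (H\"older with exponents $3/(2-p)$ and $3/(p+1)$, the embedding $W^{2,2}(\Omega)\hookrightarrow W^{1,6}(\Omega)$ and \eqref{lad1q} with $q=2$), and one concludes with Young's inequality \eqref{you} and a final application of \eqref{lad1q}. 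So the core mechanism and the way the exponents $6/(p+1)$ and $1/(p-1)$ arise coincide with the paper.

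Two points in your plan are misdirected and should simply be dropped, not executed. First, the opening step of testing with $-\Delta v$ (or $-\partial^2_{hh}v$) in weak form and discarding the monotone term as nonnegative leaves you only with something like $\eta\,\|\Delta v\|^2\le c\,\|f\|\,\|\Delta v\|$ plus boundary contributions, an estimate that degenerates as $\eta\to 0$ and cannot yield \eqref{mainest1}; the paper never integrates by parts at this stage, precisely because Proposition \ref{Vteo1} allows all manipulations to be done pointwise, and no boundary term of the $p$-structure appears anywhere in the proof. Second, and consequently, the hypothesis $(2-p)C_4<1$ has nothing to do with such boundary terms: $C_4$ is just the constant in the purely elliptic inequality $\|D^2v\|\le C_4\|\Delta v\|$ for fields in $W^{2,2}(\Omega)\cap W_0^{1,2}(\Omega)$, and the smallness is used only to absorb the term $(2-p)\|D^2v\|$ above (this is also why convex domains, where $C_4=1$, admit every $p\in(1,2)$). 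Likewise, no splitting of $\Omega$ according to the size of $|\nabla v|$ is needed for $\mu$-independence: taking $\mu\in(0,1]$ without loss of generality, one just uses $(\mu+|\nabla v|)^{2-p}\le 1+|\nabla v|^{2-p}$.
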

\begin{proof} In order to avoid a useless dependence on $\mu$, we assume, without loss of generality,
$\mu\in(0,1]$. At first note that, by replacing $\varphi$ by $v$ in
\eqref{buf3} it is easy to get the following estimate for $\|\nabla
v\|_p$, uniformly in $\eta$,
 $$ \|\nabla v\|_{p}^{p}\leq
 \mu^p\,|\O|+\,2^{2-p}\int_\O f\cdot v\,dx\, \leq C\,\left(1+\int_\O f\cdot v\,dx\,\right)\,.$$
Since, by Proposition \ref{Vteo1}, $v\in W^{2,2}_{loc}(\O)$, the
$i^{\rm th}$ equation \eqref{App} can be written almost everywhere
in $\O$ as \be\label{eqbis}\ba{ll}\dy\vs1 \eta \Delta
v_i+\,(\m+\,|\nabla v|)^{p-2}\,\Delta
v_i\\\hskip2cm\dy+(p-2)(\m+\,|\nabla v|)^{p-3}\,|\nabla
v|^{-1}\nabla v\cdot \left({\pa_j}\,\nabla v\right) \pa_j\,v_i
=-\,f_i\,.\ea\ee By multiplying both sides by $\,\Delta v_i\,$ and
summing over $i=1,2,3$, we have
$$\ba{ll}\vs1 \dy\eta\, |\Delta
v|^2+\,(\m+\,|\nabla v|)^{p-2}\,|\Delta v|^2\\
\dy\hfill=(2-p)(\m+\,|\nabla v|)^{p-3}|\nabla v|^{-1}\nabla v\cdot
\left( \pa_j\,\nabla v\right)\pa_j\,v_i\, \Delta v_i -\,f_i\, \Delta
v_i\,, \mbox{ a.e. in } \Omega\,.\ea$$ Next, we drop the term $\eta
|\Delta v|^2$, and bound the left-hand side from below by
$\,(\,\m+\,|\nabla v|\,)^{\,p-2}\,|\Delta v|^2$. Multiplying the
estimate thus obtained by $(\m+\,|\nabla v|)^{2-p}$ and then
integrating over $\O$ we get
$$ \int_{\O}|\,\Delta
v\,|^2\,dx\leq (2-p)\int_{\O}|\,D^2 v\,|\,|\,\Delta
v\,|\,dx+\int_{\O}(\,\m+\,|\nabla v|\,)^{2-p}\,|\,f\,|\,|\,\Delta
v\,|\,dx\,,
$$ where we have used the estimate (for details see the Appendix)
 $$\left|\,\nabla v\cdot
\left( \pa_j\,\nabla v\right)( \pa_j\,v_i\,)\, \Delta
v_i\,\right|\leq \,|\,\nabla v\,|^2\, |\,D^2\,v\,|\,|\,\Delta
v\,|\,.$$ Observing that $(\m+\,|\nabla v|)^{2-p}\leq
\m^{2-p}+\,|\nabla v|^{2-p}$, using H\"{o}lder's inequality, and
dividing both sides by $\|\,\Delta v\,\|$, we get \be\label{lap1}
\|\,\Delta v\,\|\leq (2-p)\,\|\,D^2v\,\|+\|\,|\nabla
v|^{2-p}\,f\,\|+\,\|\,f\,\|\,. \ee Let us estimate the first two
terms on the right-hand side. For the first term we employ estimate
\eqref{lad2}.
 As far as the second  term in \eqref{lap1} is
concerned, by applying H\"{o}lder's inequality with exponents
$3/(2-p)$ and $3/(p+1)$, the Sobolev embedding of $W^{2,2}(\O)$ in $
W^{1,6}(\O)$, and by appealing to the estimate \eqref{lad1q} with
$q=2$, we get
$$\ba{ll}\dy\vs1 \|\,|\nabla v|^{2-p}\,f\|\leq \|\,\nabla
v\,\|_6^{2-p}\, \|\,f\|_{\frac{6}{p+1}}\leq C\,\|\,\Delta
v\,\|^{2-p}\, \|\,f\|_{\frac{6}{p+1}}. \ea$$ By using the above
estimates  in \eqref{lap1}, we get
$$ \|\,\Delta v\,\|\leq (2-p)\,C_4\|\,\Delta v\,\|+ C\,\|\,\Delta
v\,\|^{2-p}\, \|\,f\|_{\frac{6}{p+1}}+\,\|\,f\,\|\,.$$ Recalling
that $(2-p) C_4<\, 1$, and applying the Young's inequality
\be\label{you} a^{2-p}\,b\leq \varepsilon \, a+\, c(\varepsilon)\,
b^\frac{1}{p-1}\,,\ee
 it is easy to recognize
that the estimate \be\label{rep42}\|\,\Delta v\,\|\leq
\,C\,\left(\|f\|+\|f\|_{\frac{6}{p+1}}^{\frac{1}{p-1}}\right)\ee
holds.
 By using once again \eqref{lad1q} we prove \eqref{mainest1}.
\end{proof}

\section{The $W^{2,2}$-regularity result: $p<2\,$.}

\begin{proof}[Proof of Theorem \ref{teorema}]
We deal separately with the case $\mu>0$ and the degenerate case
 $\mu=0$. \vskip0.2cm \emph{ The case $\mu>0$ } -
Consider the ``sequence'' $(v_\eta)$ consisting of the solutions to
problem \eqref{App}, for $\eta>0$. By the above proposition the
sequence $(v_\eta)$ is uniformly bounded in $W^{2,2}(\O)$.
Therefore, by Rellich's theorem, there exists a field $u\in
W^{2,2}(\O)$ and a subsequence, which we continue to denote by
$(v_\eta)$, such that $v_\eta\rightharpoonup u$ weakly in
$W^{2,2}(\O)$, and strongly in $W^{1,q}(\O)$ for any $q<6$. Let us
prove that \be\label{a2}
 \int_{\Omega}S(\nabla u) \cdot \nabla
\varphi \,dx= \lim_{\eta\to 0^+}\left\{\int_{\Omega}S(\nabla v_\eta)
\cdot \nabla\varphi \,dx+\eta\int_{\Omega}\nabla v_\eta\cdot
\nabla\varphi \,dx\right\}\,,\ee for any $\varphi \in
C_0^{\infty}(\Omega)$. By applying \eqref{tensorS1} and then
H\"{o}lder's inequality, we get
$$\ba{ll}\vs1\dy\left|\int_{\Omega} S(\nabla u)
\cdot \nabla \varphi \,dx-\int_{\Omega}S(\nabla v_\eta) \cdot
\nabla\varphi \,dx\right|\\
\vs1\dy\leq c\,\int_{\Omega}\! \,\left(\,\mu+|\nabla u\,|+|\nabla
v_\eta|\,\right)^{p-2}\,|\,\nabla u-\nabla v_\eta\,|\,|\nabla
 \varphi|\, dx\,\\\vs1\dy
  \leq c\,\int_{\Omega}\! \,\left|\,\nabla u-\nabla
v_\eta\,\right|^{p-1}\,|\nabla
 \varphi|\, dx\,\leq c\,\|\,\nabla
v_\eta\!-\nabla u\,\|_p^{p-1}\,\|\,\nabla \varphi\,\|_p\,.\ea$$ The
right-hand side of the last inequality tends to zero, as $\eta$ goes
to zero, thanks to the strong convergence of $v_\eta$ to $u$ in
$W^{1,p}(\O)$. Further
$$\left|\,\eta\int_{\Omega}\nabla v_\eta\cdot \nabla\varphi
\,dx\right|\leq \eta\, \|\nabla v_\eta\|\,\|\nabla \vp\|\,,
$$
where the right-hand side tends to zero as $\eta$ goes to zero.
Finally, observing that for any $\eta>0$ and any $\vp\in
C_0^{\infty}(\Omega)$ the right-hand side of \eqref{a2} is equal to
$\int_\O f\cdot \vp\,dx$, we show that $u$  satisfies the integral
identity \eqref{buf2aa} for any $\vp\in C_0^{\infty}(\Omega)$. By a
standard argument we show that $u$ satisfies the integral equation
\eqref{buf2aa}, for any $\vp\in V_p(\O)$. Hence $u$ is a weak
solution of \eqref{NSC}, and belongs to $\,W^{2,2}(\O)$. Moreover,
\eqref{dn} follows from the relation $\|\,\,u\,\|_{2,2}\leq\dy
\liminf_{\eta\to 0^+} \|\,v_\eta\,\|_{2,2}$, together with
\eqref{mainest1}. From the uniqueness of weak solutions we obtain
the desired result. \vskip0.2cm \emph{ The case $\mu=0$} - Let us
denote by $u_\mu$ the sequence of solutions of \eqref{NSC} for the
different values of $\mu>0$. We have shown that the sequence
$(u_\mu)$ is uniformly bounded in $W^{2,2}(\O)$. Therefore, exactly
as above, we can prove the weak convergence of a suitable
subsequence in $W^{2,2}(\O)$, and the strong convergence in
$W^{1,q}(\O)$ for any $q<6$, to the solution $u\in W^{2,2}(\O)$ of
the problem \eqref{NSC} with $\mu=0$. In this regard note that
estimate \eqref{tensorS1} also holds with $\mu=0$.

\vskip0.2cm  Finally we prove the last assertion in Theorem
\ref{teorema}. For a smooth convex domain $\O$ estimate \eqref{lad2}
holds with $C_4=1$. Hence the assumption on $p$ is merely $p>1$.
\end{proof}
\begin{remark}\label{rempmag2}
We could adapt the above arguments to the case $p>2$. Via a result
similar to Proposition \ref{Vteo1}, one shows that the solution $v$
of the approximated system \ref{App} belongs to $W^{2,2}_{loc}(\O)$.
Then reasoning as in the proof of Proposition \ref{Vteo2}, one
obtains a global estimate for $v$ in $W^{2,2}(\O)$, uniformly in
$\eta$, with a restriction on the range of $p$, $p\in (2,2+\frac
{1}{C_4})$, $C_4$ as in \eqref{lad2}. Hence, as  Theorem
\ref{teorema} above, one proves that the solution of \eqref{NSC},
with $\mu>0$, belongs to $W^{2,2}(\O)$. This result has the
advantage  to be directly proved in a general smooth domain, without
need of localization techniques. However, it requires limitations on
the range of $p$ and, moreover, it cannot directly cover the case
$\mu=0$, since the $W^{2,2}(\O)$-estimates that one obtain are not
uniform in $\mu$.
\end{remark}

\section{The $W^{2,q}$-regularity result: $q\geq\, 2$ and $p<2\,$.}
\begin{proof}[Proof of Theorem \ref{teoremaq}]
From Theorem \ref{teorema} we already know that the solution $u$ of
problem \eqref{NSC} belongs to $W^{2,2}(\Omega)$, since
$(2-p)C_4<\,1\,$. Therefore, we can write equation \eqref{eqbis}
with $u$ in place of $v$, and $\eta=0$. By multiplying this equation
 by $\left(\mu+|\nabla\,u|\right)^{2-p}$, we can write, a.e. in
$\O$, \be\label{reg1}-\Delta u-(p-2) \frac{\nabla u\cdot\nabla\nabla
u\cdot \nabla u}{(\mu+\,|\nabla u|)\,|\nabla u|}=
f\left(\mu+|\nabla\,u|\right)^{2-p}\,,\ee where we have used the
notation $\nabla u\cdot\nabla\nabla u\cdot \nabla u$ to denote the
vector whose $i^{\rm th}$ component is $\nabla u\cdot \left(
\pa_j\,\nabla u\right) \pa_j\,u_i=(\pa_l\,u_k)
\,(\pa^2_{j\,l}\,\,u_k)\,(\pa_j\,u_i)\,.$

\par
We start by proving an a priori $L^q$-estimate for the second
derivatives of $u$ by assuming, for the moment, that $u\in
W^{2,q}(\O)$. We follow an argument similar to that used for proving
the $W^{2,2}$-estimates of $u$. We multiply both sides of equation
\eqref{reg1} by $-\,\Delta u\,|\Delta u|^{q-2}$, and integrate in
$\Omega$. We get (for details see the Appendix)
$$\int_{\O}|\,\Delta u\,|^q\,dx\\ \dy \hfill\leq (2-p)\int_{\O}|\,D^2
u\,|\,|\,\Delta u\,|^{q-1}\,dx+\int_{\O}(\,\m+\,|\nabla
u|\,)^{2-p}\,|\,f\,|\,|\,\Delta u\,|^{q-1}\,dx\,. $$%
By appealing to H\"{o}lder's inequality and to the inequality
$(\m+\,|\nabla u|)^{2-p}\leq 1+\,|\nabla u|^{2-p}$, we show that
\be\label{lap1w}\ba{ll}\vs1\dy \|\,\Delta u\,\|_q^q\,\leq
&\dy\!\!(2-p)\,\|\,D^2u\,\|_q \|\,\Delta u\,\|_q^{q-1}\\\hfill&\dy
+\,\|\,f\,\|_q\|\,\Delta u\,\|_q^{q-1}+ \|\,|\nabla
u|^{2-p}\,f\,\|_q\|\,\Delta u\,\|_q^{q-1}\,.\ea \ee Further, by
dividing both sides by $\|\,\Delta u\,\|_q^{q-1}$, one gets
\be\label{lap122w} \|\,\Delta u\,\|_q\leq (2-p)\,\|\,D^2u\,\|_q
+\,\|\,f\,\|_q+\|\,|\nabla u|^{2-p}\,f\,\|_q. \ee We estimate the
first term  on the right-hand side of \eqref{lap122w} via inequality
\eqref{ladaq}.\par
 Concerning the last term on the right-hand side, we start by
assuming that $q\in \left(2,\,3\right)$. As usual we denote by
$q^*=3p/(3-p)$ the Sobolev embedding exponent of $q$. By applying
H\"{o}lder's inequality, with exponents $s=q^*/(2-p)q$ and
$s'=r(q)/q$, 
we get \be\label{cmha}\|\,|\nabla u|^{2-p}\,f\|_q\leq \dy\|\,\nabla
u\,\|_{q^*}^{2-p}\, \|\,f\|_{r(q)} \,.\ee From \eqref{lap122w}, by
appealing to \eqref{lad1q}, \eqref{cmha} and to Young's inequality
 one easily gets
$$
\| \Delta u\|_{q}\leq (2-p)\,C_5\| \Delta u\|_{q}+\|f\|_{q}+
\varepsilon\| \Delta u\|_{q}+c(\varepsilon)
 \|f\|_{r(q)}^\frac{1}{p-1}\,.
$$
Recalling the assumption on $p$, a further application of estimate
\eqref{lad1q} gives
\begin{equation}\label{dntipo}
 \| u\|_{\,2,q}\leq
 C\left(\|f\|_q+\,\|f\|_{r(q)}^\frac{1}{p-1}\right)\,.
\end{equation}
\par
Next we assume that $q>3$. We will use arguments similar to the
previous ones. Actually, by appealing to the Sobolev embedding
$W^{1,q}(\Omega) \hookrightarrow L^{\infty}(\Omega)$, to the
estimate \eqref{lad1q}, and to Young's inequality, we estimate the
last term on the right-hand side of \eqref{lap122w} as follows:
 \be\label{cmhaq}
\|\,|\nabla u|^{2-p}\,f\|_q\leq \dy\|\,\nabla u\,\|_{\infty}^{2-p}\,
\|\,f\|_{q}\leq\dy C\,\|\,\Delta u\,\|_{q}^{2-p}\, \|\,f\|_{q}\leq
\varepsilon\,\|\,\Delta u\,\|_{q}+c(\varepsilon)\,
\|\,f\|_{q}^{\frac{1}{p-1}} \,.\ee Then, by repeating verbatim the
arguments used above, one shows that
 $u$ is bounded in $W^{2,q}(\O)$, uniformly with respect to $\mu$, and that the  estimate
\eqref{dntipo} holds. Finally, the argument used in the proof of the
Theorem \ref{teorema} in order to extend the results to the
degenerate case $\mu=0$ apply here as well. \vskip0.2cm The previous
arguments are formal, since we have assumed that solutions belongs
to $W^{2,q}(\O)$. However the following argument applies. Let us
consider the problem
\be\label{rega1}\left\{\ba{ll}\vspace{1ex}\dy-\Delta w^{\ve}-(p-2)
\frac{\nabla J_\ve(u)\cdot\nabla\nabla w^\ve\cdot \nabla J_\ve(u)}
{\left(\mu+\,J_\ve(|\nabla u|)\right)\,J_\ve(|\nabla u|)}=
f\left(\mu+|\nabla\,u|\right)^{2-p},\quad\mbox{ in } \Omega\,,\\\dy
w^{\ve}=0\,, \quad \mbox{ on } \partial \O\,,\ea\right . \ee  where
$w^\ve$ is the unknown and $J_\ve$ denotes the Friedrichs mollifier.
The coefficients of this modified system belong to
$C^{\infty}(\R^n)$. We can also write this system in divergence
form, as follows: \be\label{divf}-
\pa_{h}\left[\,m_{ijhk}(x)\,\pa_k\,w^\ve_j\,\right]+(p-2)\,\pa_{h}
\left[\,c^{\ve}_{ijhk}(x)\,\right]\,\pa_{k}\,w^\ve_j =
f\left(\mu+|\nabla\,u|\right)^{2-p}\,,\ee where $$m_{ijhk}(x) =
\delta_{ij}\delta_{hk}+ (p-2)\,c^\ve_{ijhk}(x)$$ and
$$c^{\ve}_{ijhk}(x) =\pa_{h} J_\ve(u_i)\,\pa_{k}J_\ve(u_j)
\frac{1}{\left(\mu+\,J_\ve(|\nabla u|)\right)\,J_\ve(|\nabla u|)}.$$
Further, let
$$c_{ijhk}(x)
=(\,\pa_{h}\, u_i\,)\,(\,\pa_{k}\,u_j\,)
\frac{1}{\left(\mu+\,|\nabla u|\right)\,|\nabla u|}\,.$$ From the
well known estimate \be\label{a} |\nabla J_\ve(u)|= |J_\ve(\nabla
u)|\leq J^\ve(|\nabla u|)\ee we get
\be\label{b}|c^{\ve}_{ijhk}(x)|\leq 1,\ \ \mbox{ uniformly in } x\,,
\ve\,, \mbox{ and } \mu\,.\ee This shows that the system
\eqref{divf} (hence, the system \eqref{rega1}) is a linear elliptic
system with regular coefficients. For such a system it is well known
that if a force term $F$ belongs to $L^q(\O)$, $\,q\geq 2\,$, then
the solution belongs to $W^{2,q}(\O)$ (see, for instance,
\cite{GiaqMart}). By following the previous arguments with $u$
replaced by $w^\ve$, and by using \eqref{a} and \eqref{b}, it is
straightforward to obtain the estimate \eqref{dntipo} for $w^\ve$.
Note that such estimates are uniform with respect to $\mu$ and
$\ve$. Hence, there exists a subsequence, still denoted by $w^\ve$,
and an element $w\in W^{2,q}(\O)$ such that, as $\ve$ goes to zero,
$w^\ve$ converges to $w$, weakly in $W^{2,q}$. Convergence is also
strong in $W^{1,r}(\O)$: for any $r$ if $q> 3$, and for any $r\in
\left(1,\frac {3q}{3-q}\right)$ if $q<3$. Let us show that $w$ is a
solution of the system \be\label{rega1w}\dy-\Delta w-(p-2)
\frac{\nabla u\cdot\nabla\nabla w\cdot \nabla u}
{\left(\mu+\,|\nabla u|\right)\,|\nabla u|}=
f\left(\mu+|\nabla\,u|\right)^{2-p}. \ee To this purpose, we write
equations \eqref{rega1} and \eqref{rega1w} in the weak form, and
take their difference, side by side. This leads to the expression
\be\label{tends}\ba{ll}\dy
\int_{\O}\left(\pa_hw^{\ve}_i-\pa_hw_i\right)\pa_h\vp_i\,dx+(2-p)\int_{\Omega}
 \left(c^\ve_{ijhk}-c_{ijhk}\right)\pa^2_{hk}w^{\ve}_j\,\vp_i\,dx\\
 \dy +\,(2-p)\int_{\Omega}
 c_{ijhk}\left(\pa^2_{hk}w^{\ve}_j-\pa^2_{hk}w_j\right)\,\vp_i\,dx,\ea\ee
for any $\vp\in C_0^{\infty}(\Omega)$. The first integral goes to
zero as $\ve$ goes to zero, thanks to the strong convergence of
$w^\ve$ to $w$ in $W^{1,2}(\O)$. Concerning the second integral, we
recall that mollifiers converge in $L^p$ to the mollified function,
as  $\ve$ goes to zero, and that $L^p$ convergence implies almost
everywhere convergence of a subsequence.
Therefore, $c^\ve_{ijhk}$
converges to $c_{ijhk}\,,$ a.e. in $\O$. From \eqref{b}, by
recalling that $\Omega$ is bounded and by using the dominated
convergence theorem, it follows that \be\label{limeps} \lim_{\ve\to
0}\int_{\O}|c^\ve_{ijhk}-c_{ijhk}|^2\,dx=0\,.\ee Hence the second
integral in \eqref{tends} goes to zero. The last integral in
\eqref{tends} tends to zero, thanks to the weak convergence of
$w^\ve$ to $w$ in $W^{2,q}(\O)$, since the coefficients
$c_{ijhk}(x)\,$ are bounded.
\par
Finally, it is easy to verify that $w=\,u\,$. Indeed, by taking the
difference of \eqref{reg1} and \eqref{rega1w}, side by side, and by
setting $V=u-w$, we get
$$\left\{\ba{ll}\vs1
\dy-\Delta V-(p-2) \frac{\nabla u\cdot\nabla\nabla V\cdot \nabla u}
{\left(\mu+\,|\nabla u|\right)\,|\nabla u|}=0, \quad \mbox{ in } \O\,,\\
\dy V=0\,, \quad \mbox{ on }
\partial \O\,.\ea\right . $$
Finally, multiply the above equation by $\Delta V$ and integrate in
$\Omega$. By appealing to arguments already used, one readily
recognizes that, under our assumptions on $p$,  the vector $V$
satisfies $\|\,\Delta V\,\|=0$. Hence $V=0$, by uniqueness.\par
\end{proof}
The Corollary \ref{corollaryq2} is an immediate consequence of
Theorem \ref{teoremaq}. Details are left to the reader.
\section{Appendix}
 Our aim is to show the estimate $$|\,I\,|:=\left|\,\nabla v\cdot
\left( \pa_j\,\nabla v\right) (\pa_j\,v_i)\, \Delta v_i\,\right|\leq
\,|\nabla v|^2\, |D^2\,v|\,|\Delta v|\,.$$ In the sequel, for
convenience, we sometimes avoid the summation convention, by
explicitly writing the sums, even if repeated indexes appear.\par%
We recall that
$$(\,D^2v_k)^2\,:=\sum_{j,h=1}
^3\!\!\left|\,\pa_{jh}^2\,v_k\,\right|^2 \quad\mbox{ and }\quad
|\,D^2v|^2:=\sum_{k=1}^3 (\,D^2v_k)^2\,:=\sum_{k,j,h=1}
^3\!\!\left|\,\pa_{jh}^2\,v_k\,\right|^2.$$
We introduce the vectors
$b$ and $w$, whose components are defined as follows
$$b_j:=(\pa_j\,v)\cdot \, \Delta v\,, \quad w_k^2:=\sum_{j,h=1}^3\left(\,(\pa_h\,v_k)\,b_j\right)^2\,.$$
 The modulus of vector $b$ satisfies the following estimate:
 $$|\,b\,|=\sum_{j=1}^3\, b_j^2\,\leq \sum_{j=1}^3|\,\pa_j\,v|^2|\Delta v|^2=|\Delta v|^2
 \sum_{j=1}^3 \sum_{i=1}^3(\,\pa_j\,v_i)^2=|\Delta v|^2|\nabla v|^2.$$
Hence \be\label{wk} w_k^2=\sum_{h=1}^3\,\left(\pa_h\,v_k\right)^2
\sum_{j=1}^3\,b_j^2=|\nabla v_k|^2|\Delta v|^2|\nabla v|^2\,.\ee
Moreover
$$\ba{ll}\vs1 \dy|\,I\,|=\left|\,\sum_{j,h,k=1}^3\left(\pa_h\,v_k\right)
\left(\pa^2_{hj}\,v_k\right)\,b_j\right|\leq
\sum_{k=1}^3\left|\,\sum_{j,h=1}^3\left(\pa^2_{hj}\,v_k\right)\,(\pa_h\,v_k)\,b_j
\right|\\\hfill \dy \leq \sum_{k=1}^3\sqrt{
\sum_{j,h=1}^3\left(\pa^2_{hj}\,v_k\right)^2}\,\sqrt{\sum_{j,h=1}^3
\left(\,(\pa_h\,v_k)\,b_j\right)^2}\,,\ea$$ where, in the last step,
we have used that, for any pair of tensors $A$ and $B$,  there holds
$|A\cdot B|\leq |A|\,|B|$. Hence, by the above notations and
estimate \eqref{wk}, we get
$$\ba{ll}\vs1\dy|\,I\,|\leq \sum_{k=1}^3|D^2v_k|\,|w_k|\leq
\,|\Delta v|\,|\nabla v|\,\sum_{k=1}^3|D^2v_k|\,|\nabla v_k|
\\
\dy \leq \,|\Delta v|\,|\nabla
v|\sqrt{\sum_{k=1}^3|D^2v_k|^2}\,\sqrt{\sum_{k=1}^3|\nabla v_k|^2}=
\,|\Delta v|\,|\nabla v|^2\,|D^2v|\,,\ea$$ which is our thesis.
  \vskip 0.5cm
\indent
 {\bf Acknowledgments}\,:
The authors like to thank Professor M. Fuchs and Professor P.
Kaplick\'y for giving some interesting references.
  The work of the second author was
supported by INdAM (Istituto Nazionale di Alta Matema\-tica) through
a Post-Doc Research Fellowship.

\end{document}